\newcommand{\R}{\mathbb{R}}
\newcommand{\Tc}{\mathcal{T}}
\newcommand{\Pc}{\mathcal{P}}
\newcommand{\cle}{ \preccurlyeq }
\newcommand{\cge}{ \succcurlyeq }
\newcommand{\Tref}{\hat{T}}
\newcommand{\norm}[2]{\left\|#1\right\|_{#2}}
\newcommand{\laplace}[1]{\Delta #1}
\newcommand{\andtext}{\textrm{and}}
\newcommand{\vg}{\textbf{g}}
\newcommand{\vq}{\textbf{q}}
\newcommand{\vu}{\textbf{u}}
\newcommand{\vv}{\textbf{v}}
\newcommand{\vw}{\textbf{w}}
\newcommand{\vzero}{\textbf{0}}
\newcommand{\bfe}{\textbf{f}}
\newcommand{\vV}{\mathrm{{\bf V}}}
\newcommand{\TcV}{\Tc_{\omega_V}}
\newcommand{\intO}{\int_\Omega}
\newcommand{\Hdiv}[1]{H(\textrm{\textup{div}},{#1})}
\newcommand{\Hdivoz}[1]{H_0(\textrm{\textup{div}},{#1})}
\newcommand{\trace}{\textrm{\textup{tr}}}
\newcommand{\tracen}{\textrm{\textup{tr}}_{\boldsymbol{n}}}
\newcommand{\RT}[1]{\mathcal{RT}^{#1}}
\newcommand{\IRT}{I_\mathcal{RT}}
\newcommand{\pol}{\Pi}
\newcommand{\ondom}[1]{\textrm{~on~} #1}
\numberwithin{equation}{section}
\renewcommand{\epsilon}{\varepsilon}
\let\div\undefined\DeclareMathOperator{\div}{div}
\newcommand{\curl}[1]{\textrm{\textup{curl}~} #1}
\newcommand{\Curl}[1]{\textrm{\textup{Curl}~} #1}
\newcommand{\intd}{~\mathrm{d}}
\newcommand{\Rec}{\mathcal{R}_h}
\newcommand{\Vh}{\vV_h}
\newcommand{\Wh}{W_h}
\newcommand{\w}{\boldsymbol{w}}
\newcommand{\blfB}{\mathcal{B}}
\newcommand{\Sigh}{\boldsymbol{\Sigma}_h}
\newcommand{\Sigho}{\Sigma_{h,0}}
\newcommand{\sig}{\boldsymbol{\sigma}_h}
\newcommand{\sigb}{\boldsymbol{\sigma}}
\newcommand{\sigV}{\boldsymbol{\sigma}^V_h}
\newcommand{\Qh}{Q_h}
\newcommand{\tQh}{\widetilde{Q}_h}
\newcommand{\tqh}{\tilde{q}_h}
\renewcommand{\a}{\boldsymbol{\xi}}
\renewcommand{\b}{b}
\renewcommand{\Vert}{\mathcal{V}}
\newcommand{\ov}{\omega_V}
\newcommand{\vertiii}[1]{{\left\vert\kern-0.25ex\left\vert\kern-0.25ex\left\vert #1 
    \right\vert\kern-0.25ex\right\vert\kern-0.25ex\right\vert}}
\newcommand{\normsmooth}[1]{\vertiii{#1}}
\newcommand{\normlo}[1]{\norm{#1}{L^2(\ov)}}
\newcommand{\Projlktwo}{\mathcal{P}^{k-2}_{\ov}}
\newcommand{\skp}[1]{\left( #1 \right)_{L^2(\Omega)}}
\newcommand{\skpV}[1]{\left( #1 \right)_{L^2(\ov)}}
\newcommand{\into}{\int_{\ov}}
\newcommand{\newvec}[2]{\begin{pmatrix}#1\\#2\end{pmatrix}}
\newcommand{\oswald}{\mathcal{S}}
\newcommand{\oswaldt}{\tilde{\mathcal{S}}}
\newcommand{\restr}[2]{\ensuremath{\left.#1\right|_{#2}}}
\newcommand{\bubb}{\mathcal{P}^\mathcal{B}}
\newcommand{\normsigo}[1]{\norm{#1}{\Sigho}}
\newcommand{\normtqo}[1]{\norm{#1}{\tQh}}
\newcommand{\normwo}[1]{\norm{#1}{\Wh}}
\newcommand{\kos}{\kappa}
\newcommand{\fortin}{I_{F}}
\newcommand{\TheTitle}{Divergence-free Reconstruction Operators for Pressure-Robust Stokes Discretizations With Continuous Pressure Finite Elements}
\newcommand{\TheAuthors}{P. L. Lederer, A. Linke, C. Merdon, J. Sch\"oberl}
\title{{\TheTitle}}
\author{
  Philip L. Lederer\thanks{Institute for Analysis and Scientific Computing, TU Wien, Austria
    (\email{philip.lederer@tuwien.ac.at},\email{joachim.schoeberl@tuwien.ac.at} ).}
  \and
  Alexander Linke\thanks{Weierstrass Institute for Applied Analysis and Stochastics, Germany
    (\email{alexander.linke@wias-berlin.de}, \email{christian.merdon@wias-berlin.de}).}
  \and
  Christian Merdon\footnotemark[3]
  \and
  Joachim Sch\"oberl\footnotemark[2]
}
\begin{document}

\maketitle

\begin{abstract}
Classical inf-sup stable mixed finite elements for the incompressible (Navier--)Stokes equations are not pressure-robust, i.e.,
their velocity errors depend on the continuous pressure. However, a modification only in the right hand side of a Stokes discretization is able to reestablish pressure-robustness,
as shown recently for several inf-sup stable Stokes elements with discontinuous discrete pressures.
In this contribution, this idea is extended to low and high order Taylor--Hood and mini elements, which have continuous discrete pressures.
For the modification of the right hand side a velocity reconstruction operator is constructed that maps discretely divergence-free
test functions to exactly divergence-free ones. The reconstruction is based on local $H(\mathrm{div})$-conforming flux equilibration on vertex patches,
and fulfills certain orthogonality properties to provide consistency and optimal a-priori error estimates.
Numerical examples for the incompressible
Stokes and Navier--Stokes equations confirm that the new pressure-robust Taylor--Hood and mini elements
converge with optimal order and outperform significantly
the classical versions of those elements when the continuous pressure is comparably large.
\end{abstract}

\begin{keywords}
  incompressible Navier--Stokes equations, mixed finite elements, pressure robustness, exact divergence-free velocity reconstruction,
  flux equilibration
\end{keywords}

\begin{AMS}
  65N12, 65N30, 76D07, 76D05, 76M10
\end{AMS}

\section{Introduction and notation}

\subsection{Introduction}
The classical Taylor--Hood element \cite{verf:1984, girault:raviart, msw:2013}, its higher order extensions \cite{brezzi:falk} and the classical mini element
\cite{abf:1984, girault:raviart} are among the most popular
discretizations for the incompressible Navier--Stokes equations, since they are easy to implement, fulfill a discrete
LBB condition and converge with optimal order.
Nevertheless they suffer from a common lack of robustness: since they use continuous discrete pressures, they relax the divergence constraint
and are thus not pressure-robust \cite{JLMNR:sirev}, i.e., their velocity error is pressure-dependent, as one can see for an incompressible Stokes model problem
$-\nu \Delta \vu + \nabla p = \bfe, \div \vu = 0$
with homogeneous Dirichlet velocity boundary conditions (with $\nu > 0$).
Here, the velocity errors for the Taylor--Hood and mini elements read as
$$
  \norm{\nabla (\vu - \vu_h)}{L^2(\Omega)} \leq C \inf_{\vw_h \in \Vh} \norm{\nabla (\vu - \vw_h)}{L^2(\Omega)} + \frac{1}{\nu} \inf_{q_h \in Q_h}\norm{p - q_h}{L^2(\Omega)},
$$
where $\Vh$ and $Q_h$ denote the discrete trial/test spaces for the velocities and the pressures, and $C$ is a $\mathcal{O}(1)$ constant.
This velocity error estimate is sharp and shows some kind of locking phenomenon \cite{JLMNR:sirev, linke:cmame:2014, reusken, FH88, olhl2009}:
for small parameters $\nu \ll 1$ the velocity error can become really large.
The issue is well-known in the literature, it shows up in real-world situations \cite{DGT94, gerbeau:1997, LM2015, JLMNR:sirev} and it is sometimes called
{\em poor mass conservation} \cite{GLRW:2012},
since for $H^1$-conforming mixed methods such large velocity errors are accompanied by large divergence errors.

Recently, it was shown for several mixed finite element methods
like the nonconforming Crouzeix--Raviart element \cite{linke:cmame:2014, blms:2015}
and the conforming $P_2^+$-$P_1^{\mathrm{disc}}$ element \cite{lmt:2016}
(and also for a finite volume \cite{Linke:2012} and for some Hybrid Discontinuous Galerkin methods \cite{dPEL:2016}), which
all use {\em discontinuous pressures},  that a modification only in the right hand side of the Stokes discretization
is able to reestablish pressure-robustness. This approach leads to a velocity error estimate \cite{linke:cmame:2014, lmt:2016}
$$
  \norm{\nabla (\vu - \vu_h)}{L^2(\Omega)} \leq C \inf_{\vw_h \in \Vh} \norm{\nabla (\vu - \vw_h)}{L^2(\Omega)} + C_{\mathrm{cons}} h^{l+1} \lvert \vu \rvert_{H^{l+1}(\Omega)},
$$
where $l$ denotes the approximation order of the discrete pressure space and $C_{\mathrm{cons}}$ denotes an $\mathcal{O}(1)$ constant, arising due to a consistency error in the
discrete right hand side.
Note that similar pressure-robust velocity error estimates can be achieved also with {\em divergence-free} mixed methods like
\cite{qin:phd, zhang:scott:vogelius:3D, zhang:2009, guzman:neilan:3d, guzman:neilan:2014, ls:2016, guosheng}.
The key idea for the modification of the Stokes right hand side in \cite{linke:cmame:2014} is that discrete divergence-free velocity test functions
are mapped to exact divergence-free ones by some velocity reconstruction operator. Then, irrotational parts (in the sense of the
continuous Helmholtz decomposition) in the exterior force $\bfe$ of the
above Stokes model problem are orthogonal in the $L^2$ vector product
to (mapped) discrete-divergence velocity test functions and do not spoil the discrete velocity solution $\vu_h$ \cite{linke:cmame:2014}.
Indeed, the so-called {\em poor mass conservation} arises just due to a lack of $L^2$ orthogonality between discrete-divergence-free velocity
test functions and {\em arbitrary} gradient fields $\nabla \psi$ \cite{Linke:2012, linke:cmame:2014, JLMNR:sirev}.
For LBB-stable mixed finite element methods with discontinuous pressures the corresponding velocity reconstruction operators
employ $H(\mathrm{div})$-conforming finite element spaces. The velocity reconstruction operator is defined elementwise, and fulfills
several consistency properties \cite{lmt:2016}.

At the heart of the present contribution lies the construction of novel velocity reconstruction operators for the Taylor--Hood element family and the mini element, which have
continuous discrete pressures, such that a modification of the Stokes right hand side yields a pressure-robust mixed method.
A first version of such velocity reconstruction operators has been presented in \cite{lederer:2016}. Similarly, velocity reconstructions in the spirit of \cite{gww:2014}
could be probably adapted also.
Since the new corresponding mixed
methods have the same stiffness matrix like their classical counterparts, the discrete LBB condition is inherited from the original method.
Optimal convergence of the new pressure-robust mixed methods is shown.
The novel velocity reconstructions require the solution of local discrete problems, which are defined on vertex patches.
The reconstructions map $H^1$-conforming velocity test functions to $H(\mathrm{div})$-conforming ones, which preserve the discrete divergence.
Especially, discrete divergence-free velocities are mapped to exact divergence-free ones. The construction uses ideas from flux equilibration for
a-posteriori estimates \cite{dm:1999, bs:2008}. In order to achieve optimal convergence order for the novel mixed methods, the velocity reconstructions have to fulfill
some consistency properties, which are incorporated in the local problems to be solved. For this, bubble projectors \cite{bubbletransform}, averaging operators \cite{oswald}
and properties of the Koszul complex \cite{koszul} have to be exploited.

\subsection{Structure of this paper}
After defining some notation in the next subsection, in Section 2 the continuous Stokes problem
is introduced and the new pressure-robust mixed finite element methods for its discretizations are presented in a quite abstract manner.
The main Theorem \ref{theoremone} summarizes the most important properties
of the velocity reconstruction operator $\Rec$, while the proofs of these properties
are postponed to Section 4 in case of the Taylor--Hood element family and to Section 5 in case of the mini element.
Section \ref{sec::errorest} presents a common finite element error analysis for the proposed Taylor--Hood and mini element variants. It is shown that
their velocity errors are indeed pressure-robust, and that --- quite surprisingly ---
even pressure-robustness results hold for their pressure errors, when measured in some {\em discrete} pressure norms.
In Section 4, different finite element spaces and finite element tools like bubble projectors \cite{bubbletransform}
and Oswald interpolators are introduced, and local (saddle-point) problems on vertex patches are defined
that are fundamental for the definition of the novel velocity reconstruction operators for the Taylor--Hood finite element family.
Besides proving the unique solvability of these local problems, the properties of the corresponding reconstruction operators stated in
Theorem \ref{theoremone} are proved.
Similar to Section 4, in Section 5 velocity reconstruction operators for lowest and higher order mini elements are defined solving local problems on vertex patches,
and the properties of Theorem \ref{theoremone} are proved also in these cases.
Section 6 presents several numerical examples for the incompressible Stokes equations in 2D and 3D that show that the pressure-robust Taylor--Hood and mini element
variants can outperform clearly their classical counterparts in the best case, and are only slightly worse than the classical discretizations in the worst case.
Section 7 serves as an Appendix where some properties of the Koszul complex in 3D are demonstrated.

\subsection{Preliminaries}
We introduce some basic notation and assumptions. In this work we assume an open bounded domain $\Omega \subset \R^d$ with $d = 2,3$ and a Lipschitz boundary $\Gamma$. On $\Omega$ we define a partition $\Omega = \bigcup_{i=1}^{N_T} T_i$ into sub-domains called elements $T_i$ which will be triangles and tetrahedrons in two and three dimensions respectively. We shall denote $\Tc$ as such a partition which fulfills a shape regular assumption, so all elements fulfill $|T| \cge \textrm{diam}(T)^d$. Furthermore we call $\Tc$ quasi--uniform when all elements are essentially of the same size, i.e., there exists one global $h$ such that $h \approx \textrm{diam}(T),  \forall T \in \Tc$, see for example \cite{brezzifortin}. The set of vertices is defined as $\Vert$ and for each vertex $V \in \Vert$ we define the vertex patch $\ov$ and the corresponding triangulation $\TcV$ as 
\begin{align*}
  \ov:= \bigcup\limits_{T: V \in T} T \subset \Omega \quad \andtext \quad \TcV := \{T: V \in T \} \subset \Tc,
\end{align*}
and define the local mesh size $h_V := \max \{\textrm{diam}(T): T \in \TcV \}$. We define the polynomial spaces of order $m$ on $\Omega$ as $\pol^m(\Omega)$ and on the triangulation as
\begin{align}
    \pol^m(\Tc) := \{ q_h : \restr{q_h}{T} \in \pol^m(T)~ \forall T \in \Tc \} = \prod_{ T \in \Tc} \pol^m(T),
\end{align}
and similar for $\ov$ and $\TcV$. Furthermore we define the spaces
\begin{align*}
  L^2_0(\Omega) &:= \{ q \in L^2(\Omega): \int_\Omega q \intd x = 0\} =: Q, \\
  H^1_0(\Omega) &:= \{ u \in H^1(\Omega): \trace ~ u = 0 \ondom \partial \Omega\}, \\
  \Hdivoz{\Omega}  &:= \{ \sigb \in \Hdiv{\Omega}: \tracen \sigma = 0 \ondom \partial \Omega \}, \\
  \vV & := [H^1_0(\Omega)]^d, \\
  \vV^0 & := \{ \vv \in \vV : \div \vv = 0 \}.
\end{align*}
where $\trace$ and $\tracen$ denote the trace operators for $H^1(\Omega)$ and $\Hdiv{\Omega}$. We also define the $L^2$ projector on polynomials of order $m$ as $\mathcal{P}^m_{\Omega}$, and the Oswald interpolator $\oswald: \pol^m(\Tc) \rightarrow \pol^m(\Tc) \cap C^0(\Omega)$ (see \cite{oswald} or the averaging operator in \cite{ernguermond15}) that maps discontinuous polynomials to continuous ones.
Depending on the dimension we define the Koszul operator  (see \cite{koszul}) for $d=2$ with $\vec{x}=(x,y)$ and for $d = 3$ with $\vec{x}=(x,y,z)$ as
\begin{align*}
\begin{array}{lcccl}
\kos_{\vec{x}} : L^2(\Omega) \rightarrow  [L^2(\Omega)]^2 &\quad &\quad&\quad & \kos_{\vec{x}} : [L^2(\Omega)]^3 \rightarrow  [L^2(\Omega)]^3 \\
  \kos_{\vec{x}}(a) :=   \newvec{-y}{x} a &\quad &\quad&\quad &\kos_{\vec{x}}(a) :=  \vec{x} \times a.
\end{array}
\end{align*}
Furthermore we define the {\it Curl} operator for $d=2$ 
\begin{align*}
  &\Curl: \pol^m(\Omega) \rightarrow [\pol^m(\Omega)]^2 \\
  &\Curl{(u)} := (-\partial_y u, \partial_x u)^t.
\end{align*}
In a similar way all the above introduced spaces and operators can be defines on $\ov$. In this work we use $a \cle b$ when there exists a constant $c$ independent of $a, b, m, h$  such that $a \le c b$


\section{Continuous and discrete Stokes problems and the velocity reconstruction operator}
The incompressible Stokes problem for a right hand side forcing $\bfe \in [L^2(\Omega)]^d$ is given
in weak formulation by \cite{girault:raviart}: search for $(\vu, p) \in \vV \times Q$
such that for all $(\vv, q) \in \vV \times Q$ holds
\begin{equation}
\begin{split}
   a(\vu, \vv) + b(\vv, p) & = l(\vv), \label{cont:weak:stokes:problem} \\
     b(\vu, q) & = 0,
\end{split}
\end{equation}
where the bilinear forms $a: \vV \times \vV \to \R$
and $b: \vV \times Q \to \R$ and the linear form
$l: [L^2(\Omega)]^d \to \R$ are defined by
\begin{equation}
\begin{split}
  a(\vu, \vv) & =  \intO \nu \nabla \vu : \nabla \vv \intd x, \\
  b(\vv, q) & =  \intO q \, \div \vv \intd x, \\
  l(\vv) & = \intO \bfe \cdot \vv \intd x.
\end{split}
\end{equation}
Note that for the continuous Stokes problem holds the LBB condition
\begin{equation} \label{contstokesLBB}
 \inf_{q \in Q} \sup_{\vv \in \vV} \frac{b(\vv, q)}{\norm{q}{L^2(\Omega)} \norm{\nabla \vv}{L^2(\Omega)}} \geq \beta > 0,
\end{equation}
where $\beta$ denotes the LBB constant.

For the discretization of the continuous Stokes problem \eqref{cont:weak:stokes:problem} by inf-sup stable mixed finite element methods \cite{girault:raviart, brezzifortin}
we introduce conforming finite element spaces for the velocity $\vV_h \subset \vV$ and the pressure $Q_h \subset Q$. We assume that for the pair $\vV_h \times Q_h$
of discrete spaces holds a discrete LBB condition
\begin{equation} \label{discrete:LBB:cond}
 \inf_{q_h \in Q_h} \sup_{\vv_h \in \vV_h} \frac{b(\vv_h, q_h)}{\norm{q_h}{L^2(\Omega)} \norm{\nabla \vv_h}{L^2(\Omega)}} \geq \beta_h > 0.
\end{equation}
We remind the reader that the discrete LBB condition implies the existence of a Fortin interpolator
$\fortin : \vV \to \vV_h$ such that for all $\vv \in \vV$ and for all $q_h \in Q_h$ holds
\begin{equation}
  b(\fortin \vv, q_h) = \b(\vv, q_h)  \qquad \text{and} \qquad \norm{\nabla \fortin \vv}{L^2(\Omega)} \leq C_F \norm{\nabla \vv}{L^2(\Omega)},
\end{equation}
where $C_F$ denotes the stability constant of the Fortin interpolator \cite{girault:raviart, brezzifortin}.
Introducing the space of discrete divergence-free velocity functions
\begin{equation}
   \vV^0_h := \{ \vv_h \in \vV_h : b(\vv_h, q_h) = 0 \text{ for all }  q_h \in Q_h \},
\end{equation}
the following lemma is a classical result by the theory of mixed finite element methods \cite{girault:raviart, brezzifortin}.
\begin{lemma} \label{lemma:V0h:2:Vh}
Let the finite element spaces $\vV_h$ and $Q_h$ fulfill the discrete LBB condition \eqref{discrete:LBB:cond}, then it holds
for all $\vv \in \vV^0$
$$
  \inf_{\vv_h \in \vV^0_h} \norm{\nabla \vv - \nabla \vv_h}{L^2(\Omega)} \leq (1+C_F) \inf_{\vw_h \in \vV_h} \norm{\nabla \vv - \nabla \vw_h}{L^2(\Omega)}.
$$
\end{lemma}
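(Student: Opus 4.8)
The plan is to invoke the classical Fortin-interpolator argument, whose crux is that $\fortin$ preserves the value of $b(\cdot, q_h)$ against every $q_h \in Q_h$ while mapping into $\vV_h$. First I would note that any $\vv \in \vV^0$ satisfies $\div \vv = 0$ pointwise and hence
$$
  b(\vv, q_h) = \intO q_h \, \div \vv \intd x = 0 \qquad \text{for all } q_h \in Q_h,
$$
so that $\vv$ is a legitimate argument and the Fortin interpolator will carry this vanishing moment over to the discrete space.

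Next, I would fix an arbitrary $\vw_h \in \vV_h$ and set
$$
  \vv_h^\ast := \vw_h + \fortin(\vv - \vw_h) \in \vV_h,
$$
and verify that $\vv_h^\ast \in \vV^0_h$. Using the linearity of $b(\cdot, q_h)$ together with the defining property $b(\fortin(\vv - \vw_h), q_h) = b(\vv - \vw_h, q_h)$ of the Fortin interpolator, one obtains for every $q_h \in Q_h$
$$
  b(\vv_h^\ast, q_h) = b(\vw_h, q_h) + b(\vv - \vw_h, q_h) = b(\vv, q_h) = 0.
$$
This reduction of membership in $\vV^0_h$ to the consistency identity of $\fortin$ is the only point that requires care, and it is exactly where the discrete LBB condition enters, namely through the existence of $\fortin$ guaranteed above.

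Finally I would estimate the error. Writing $\vv - \vv_h^\ast = (\vv - \vw_h) - \fortin(\vv - \vw_h)$ and combining the triangle inequality with the $H^1$-stability bound $\norm{\nabla \fortin \vv}{L^2(\Omega)} \le C_F \norm{\nabla \vv}{L^2(\Omega)}$ applied to $\vv - \vw_h$, I get
$$
  \norm{\nabla \vv - \nabla \vv_h^\ast}{L^2(\Omega)} \le (1 + C_F) \, \norm{\nabla \vv - \nabla \vw_h}{L^2(\Omega)}.
$$
Since $\vv_h^\ast \in \vV^0_h$, the infimum on the left-hand side of the claim is bounded above by this quantity for every $\vw_h \in \vV_h$; taking the infimum over $\vw_h \in \vV_h$ on the right then yields the assertion. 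No further obstacle remains, as the estimate follows directly from the two defining properties of $\fortin$ and the triangle inequality.
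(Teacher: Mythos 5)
Your proof is correct and is precisely the classical Fortin-interpolator argument that the paper invokes by citation (it states the lemma as a standard result from Girault--Raviart and Brezzi--Fortin without reproducing the proof). The construction $\vv_h^\ast = \vw_h + \fortin(\vv - \vw_h)$, the verification that it lies in $\vV^0_h$ via the moment-preservation property of $\fortin$, and the $(1+C_F)$ bound from the triangle inequality and $H^1$-stability are exactly the standard route.
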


In the following we  propose a non-standard discretization of the right hand side of the Stokes equations, in order to obtain
pressure-robust velocity error estimates.
Key is the definition of a velocity reconstruction operator in the spirit of \cite{Linke:2012, linke:cmame:2014}
that maps discrete divergence-free velocity test functions to exact divergence-free ones.
The novelty of this contribution is that we define such reconstruction operators for mixed finite element methods,
which possess only {\em continuous} discrete pressures.
The most prominent examples of such mixed finite element methods are given by the Taylor--Hood element family
and the mini element \cite{girault:raviart, brezzifortin}. From now on we focus on the Taylor--Hood element of order $k \geq 2$ so
\begin{align*}
  \Vh := [\pol^k(\Tc)]^d \cap [C^0(\Omega)]^d \quad \text{and} \quad \Qh := \pol^{k-1}(\Tc) \cap C^0(\Omega),
\end{align*}
and give a detailed description for the mini element in Section \ref{section::mini}. 
The velocity reconstruction operators 
\begin{align*}
  \Rec: \vV_h \rightarrow \vV_h + \Sigh
\end{align*}
with some $H(\mathrm{div})$-conforming finite element space $\Sigh$
are defined  by solving local problems on vertex patches.
A precise definition is given in Section \ref{sec::ana}. 
We introduce the discrete space of scalar functions
\begin{equation}
  \tQh := \div (\Rec \Vh),
\end{equation}
and we assume that it holds $Q_h \subset \tQh$. The Oswald interpolator is now defined from $\oswald: \tQh \rightarrow \Qh$ with the property
\begin{align} \label{oswaldid}
  \restr{\oswald}{\Qh(\Omega)} = \textrm{id}.
\end{align}
For the error estimates of the finite element method to be proposed, we use the following abstract properties of $\Rec$, which are summarized 
in the following theorem.
\begin{theorem}\label{theoremone} For the reconstruction operator $\Rec$ defined by equation \eqref{defreconstruction} holds
  \begin{align}
    & i.   && \skp{\div{\Rec \vw_h}, \tqh} = \skp{\div{ \vw_h}, \oswald \tqh}  \quad \forall \tqh \in \tQh,  \label{rec:propBBB}\\
    & ii.  && \skp{\div{(\vw_h - \Rec \vw_h)}, q_h} = 0 \quad \forall \vw_h \in \vV_h, \forall q_h \in \Qh, \label{rec:propA}\\
    & iii. && \skp{\div \vw_h , q_h} = 0 ~ \forall q_h \in Q_h \Rightarrow \skp{\div{\Rec \vw_h}, \tqh} = 0 ~ \forall \tqh \in \tQh,\label{rec:propB}\\
    &      && \hspace{6cm} \text{ i.e.} \quad \div{\Rec \vw_h} = 0,\nonumber\\
    & iv.  && \skp{\vg, \vw_h - \Rec \vw_h} \le C_{\mathrm{cons}} \normsmooth{\vg}_{k-2} \|\nabla \vw_h\|_{L^2(\Omega)} \text{ for any } \vg \in [L^2(\Omega)]^d \label{rec:propC}
  \end{align}
  with {\it data oscillation} defined by \(\normsmooth{\vg}_{m} :=\left( \sum\limits_{V \in \Vert} h_V^2 \normlo{\vg - \mathcal{P}_{\ov}^m \vg}^2 \right)^{\frac{1}{2}}\).
\end{theorem}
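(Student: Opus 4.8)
The plan is to regard \eqref{rec:propBBB} as the cornerstone, to read off \eqref{rec:propA} and \eqref{rec:propB} as essentially immediate consequences of it together with \eqref{oswaldid}, and to treat the consistency estimate \eqref{rec:propC} by a separate, more delicate local argument.

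For \eqref{rec:propBBB} I would exploit the patch-wise definition of $\Rec$. The operator is assembled from local contributions $\sigV$, one per vertex $V \in \Vert$, each obtained by solving the local $H(\div)$-conforming saddle-point problem \eqref{defreconstruction} on the patch $\ov$; the nodal partition of unity $\{\varphi_V\}_{V\in\Vert}$ subordinate to the patches provides the localisation $\vw_h = \sum_{V \in \Vert} \varphi_V \vw_h$. The divergence constraint of each local problem is designed precisely so that the patch contribution reproduces a local version of the asserted identity, i.e. so that, for every $\tqh$, the contribution of $\sigV$ to $\skp{\div \Rec \vw_h, \tqh}$ matches the contribution of the localised datum to $\skp{\div \vw_h, \oswald \tqh}$. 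Summing over $V \in \Vert$ then yields \eqref{rec:propBBB}, with the averaging inherent in the Oswald interpolator $\oswald$ reconciling the overlap of neighbouring patches and the passage from the discontinuous space $\tQh$ to the continuous space $\Qh$. The point to verify here is that these local constraints are mutually consistent and indeed reproduce $\tQh = \div(\Rec \Vh)$, which is where $\Qh \subset \tQh$ and the construction of $\oswald$ enter.

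Granting \eqref{rec:propBBB}, property \eqref{rec:propA} follows at once: for $q_h \in \Qh \subset \tQh$ one has $\oswald q_h = q_h$ by \eqref{oswaldid}, hence $\skp{\div \Rec \vw_h, q_h} = \skp{\div \vw_h, \oswald q_h} = \skp{\div \vw_h, q_h}$, which rearranges to \eqref{rec:propA}. For \eqref{rec:propB}, assume $\vw_h$ is discretely divergence-free, so that $\skp{\div \vw_h, q_h} = 0$ for all $q_h \in Q_h$. Since $\oswald$ maps into $\Qh = Q_h$, evaluating \eqref{rec:propBBB} at an arbitrary $\tqh \in \tQh$ gives $\skp{\div \Rec \vw_h, \tqh} = \skp{\div \vw_h, \oswald \tqh} = 0$. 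Because $\div \Rec \vw_h$ itself lies in $\tQh = \div(\Rec \Vh)$, the choice $\tqh = \div \Rec \vw_h$ forces $\norm{\div \Rec \vw_h}{L^2(\Omega)}^2 = 0$, hence $\div \Rec \vw_h = 0$ pointwise, as claimed.

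The delicate step is \eqref{rec:propC}. Here I would again split $\vw_h - \Rec \vw_h = \sum_{V \in \Vert} (\varphi_V \vw_h - \sigV)$ and use the orthogonality against $[\pol^{k-2}(\ov)]^d$ that is imposed on each local problem --- the moment conditions whose feasibility is exactly what the bubble-projector and Koszul-complex machinery guarantees. On each patch this orthogonality allows the local projection to be subtracted for free,
\[
  \skpV{\vg, \varphi_V \vw_h - \sigV} = \skpV{\vg - \Projlktwo \vg, \varphi_V \vw_h - \sigV} \le \normlo{\vg - \Projlktwo \vg} \, \normlo{\varphi_V \vw_h - \sigV} .
\]
The remaining ingredient, and the main obstacle, is the local stability bound $\normlo{\varphi_V \vw_h - \sigV} \cle h_V \norm{\nabla \vw_h}{L^2(\ov)}$; I expect this to follow from the $L^2$-stability of the local saddle-point solve combined with a scaling and Bramble--Hilbert argument that uses the reproduction properties of $\sigV$ on $\ov$. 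Inserting this bound, applying the Cauchy--Schwarz inequality in the vertex index $V$, and invoking the finite overlap of the patches (controlled by shape regularity) then assembles the data-oscillation norm $\normsmooth{\vg}_{k-2}$ and the global factor $\norm{\nabla \vw_h}{L^2(\Omega)}$, giving \eqref{rec:propC}.
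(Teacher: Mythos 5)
Your handling of (i)--(iii) is essentially the paper's argument: the local divergence constraints are designed so that $\skp{\div{\sigV}, \tqh} = \skpV{\div{\vw_h}, \bubb_V(\tqh - \oswald\tqh)}$, and summing over $V$ with the partition-of-unity property $\sum_{V}\bubb_V = \ident$ of the bubble projectors yields \eqref{rec:propBBB}; properties \eqref{rec:propA} and \eqref{rec:propB} then follow exactly as you say, including the final choice $\tqh = \div{\Rec\vw_h} \in \tQh$. One correction of emphasis: the localisation acts on the pressure test function through the bubble projector $\bubb_V$, not on $\vw_h$ through a nodal partition of unity $\vw_h = \sum_V\varphi_V\vw_h$; no decomposition of $\vw_h$ is ever needed.

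The genuine gap is in (iv). The reconstruction is $\Rec\vw_h = \vw_h - \sum_{V}\sigV$, so $\vw_h - \Rec\vw_h = \sum_{V}\sigV$ exactly; your decomposition into $\sum_V(\varphi_V\vw_h - \sigV)$ is not an identity for this operator, and it forces you to require that each difference $\varphi_V\vw_h - \sigV$ be $L^2(\ov)$-orthogonal to $[\pol^{k-2}(\ov)]^d$ and of size $h_V\norm{\nabla\vw_h}{L^2(\ov)}$. Neither is available: the local problems impose orthogonality of $\sigV$ \emph{alone} to $[\pol^{k-2}(\ov)]^d$ (the gradient part via the divergence constraint together with $\bubb_V(b_h - \oswald b_h)=0$ for $b_h\in\pol^{k-1}(\ov)$, the complementary part via the Koszul multiplier space $\Wh(\ov)$), and they control no moments of $\varphi_V\vw_h$. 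Moreover the bound $\normlo{\varphi_V\vw_h - \sigV}\cle h_V\norm{\nabla\vw_h}{L^2(\ov)}$ fails already when $\vw_h$ is constant on an interior patch: there the local datum $\div{\vw_h}$ vanishes, so $\sigV=0$, while $\varphi_V\vw_h\neq 0$ and the right-hand side is zero. The correct argument is shorter and avoids the obstacle entirely:
\begin{equation*}
\skp{\vg,\vw_h-\Rec\vw_h} \;=\; \sum_{V\in\Vert}\skpV{\vg,\sigV} \;=\; \sum_{V\in\Vert}\skpV{\vg-\Projlktwo\vg,\sigV}
\;\cle\; \sum_{V\in\Vert}\normlo{\vg-\Projlktwo\vg}\, h_V\,\normlo{\div{\vw_h}},
\end{equation*}
using the orthogonality and the stability estimate $\normlo{\sigV}\cle h_V\normlo{\div{\vw_h}}$ of the local saddle-point solve; Cauchy--Schwarz over the vertices and the finite overlap of the patches then produce $\normsmooth{\vg}_{k-2}\,\norm{\nabla\vw_h}{L^2(\Omega)}$.
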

\begin{remark} The {\it data oscillation} $\normsmooth{\cdot}_m$ is similar to a estimation used  for the analysis of adaptive methods, see for example \cite[p. 60]{aposteriori}.
Note that for $\vg \in H^{l}(\Omega)$ and a quasi--uniform  triangulation $\Tc$  it follows using a scaling argument that
\begin{align*}
  \normsmooth{\vg}_{m} \cle h^{\min\{m+2,l+1\}} |\vg|_{H^{l}(\Omega)}.
\end{align*}  
\end{remark}
The discrete Stokes problem can now be defined by: search for $(\vu_h, p_h) \in \Vh \times Q_h$ such that for all $(\vv_h, q_h) \in \Vh \times Q_h$ holds
\begin{equation}
\begin{split}
   a(\vu_h, \vv_h) + b(\vv_h, p_h) & = l(\Rec \vv_h), \label{disc:weak:stokes:problem} \\
     b(\vu_h, q_h) & = 0.
\end{split}
\end{equation}
\begin{remark}
The stiffness matrix of the proposed discretization \eqref{disc:weak:stokes:problem} is the same as for standard inf-sup stable mixed finite element methods.
However, the discretization of the right hand side is non-standard.
The main reason for this non-standard discretization is: for the continuous Stokes problem \eqref{cont:weak:stokes:problem} it holds that
$(\vu, \psi)$ is the solution for arbitrary right hand sides of the form
$\bfe = \nabla \psi$ with $\psi \in H^1(\Omega) \big/ \R$, i.e., irrotational forces $\bfe = \nabla \psi$ lead to a no-flow velocity solution
$\vu = \vzero$ \cite{Linke:2012, linke:cmame:2014}. This is due to the $L^2$ orthogonality $\intO \nabla \psi \cdot \vw \intd x = 0$
for all $\vw \in \Hdivoz{\Omega}$ with $\div \vw = 0$.
Similarly it holds $\vu_h = \vzero$ for the discretization \eqref{disc:weak:stokes:problem}, since due to Theorem \ref{theoremone}
discrete divergence-free velocity test functions are mapped to divergence-free ones \cite{Linke:2012, linke:cmame:2014}.
\end{remark}

\section{Error estimation for the pressure-robust Stokes discretization} \label{sec::errorest}

In this section, an a-priori error analysis is performed for the solution of the discrete Stokes problem $(\vu_h, p_h)$ in \eqref{disc:weak:stokes:problem}.
The following lemma is needed to estimate the consistency error introduced due to the non-standard discretization of the right hand side in \eqref{disc:weak:stokes:problem}.
\begin{lemma} \label{lemma:est:consist:error}
For $\vv \in \vV$ with $\Delta \vv \in [L^2(\Omega)]^d$ and for all $\vw_h \in \Vh$ it holds
$$
  \lvert (\Delta \vv, \Rec \vw_h) + (\nabla \vv, \nabla \vw_h) \rvert \leq C_\mathrm{cons}   \normsmooth{\Delta \vv}_{k-2} \|\nabla \vw_h\|_{L^2(\Omega)}.
$$
\end{lemma}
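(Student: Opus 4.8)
The plan is to reduce the claim to property iv of Theorem~\ref{theoremone} by identifying its generic datum $\vg$ with $\Delta\vv$, and then eliminating the undesired $L^2$ term $\skp{\Delta\vv,\vw_h}$ through integration by parts. First I would set $\vg := \Delta \vv$, which is admissible in \eqref{rec:propC} precisely because $\Delta \vv \in [L^2(\Omega)]^d$ by hypothesis; this immediately gives
\begin{equation*}
  \skp{\Delta \vv, \vw_h - \Rec \vw_h} \le C_{\mathrm{cons}} \normsmooth{\Delta \vv}_{k-2} \|\nabla \vw_h\|_{L^2(\Omega)}.
\end{equation*}

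Next I would rewrite the left-hand side using the bilinearity of the $L^2$ product, $\skp{\Delta\vv,\vw_h-\Rec\vw_h}=\skp{\Delta\vv,\vw_h}-\skp{\Delta\vv,\Rec\vw_h}$, and then treat the first summand by Green's formula. Since $\vw_h \in \Vh \subset [H^1_0(\Omega)]^d$ has vanishing trace on $\partial\Omega$, the boundary integral $\int_{\partial\Omega}(\nabla\vv\,\boldsymbol{n})\cdot\vw_h\intd s$ drops out and $\skp{\Delta \vv, \vw_h} = -\skp{\nabla \vv, \nabla \vw_h}$. Substituting this turns the previous bound into
\begin{equation*}
  -\bigl(\skp{\Delta \vv, \Rec \vw_h} + \skp{\nabla \vv, \nabla \vw_h}\bigr) \le C_{\mathrm{cons}} \normsmooth{\Delta \vv}_{k-2} \|\nabla \vw_h\|_{L^2(\Omega)},
\end{equation*}
which is already the asserted estimate but only as a one-sided inequality.

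To promote this to the two-sided (absolute-value) bound I would exploit the linearity of $\Rec$: replacing $\vw_h$ by $-\vw_h$, which again lies in $\Vh$, flips the sign of the left-hand side while leaving $\|\nabla(-\vw_h)\|_{L^2(\Omega)}=\|\nabla\vw_h\|_{L^2(\Omega)}$ unchanged, so the identical argument yields the reverse inequality and hence the estimate on the modulus. I do not anticipate a genuine obstacle here, as the lemma is essentially a corollary of \eqref{rec:propC}; the only points requiring a little care are checking that the boundary term in Green's formula truly vanishes (guaranteed by $\vw_h|_{\partial\Omega}=0$) and confirming that the constant $C_{\mathrm{cons}}$ and the data-oscillation seminorm $\normsmooth{\cdot}_{k-2}$ are inherited verbatim from Theorem~\ref{theoremone}, so that no new constants are introduced.
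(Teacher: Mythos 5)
Your proposal is correct and follows essentially the same route as the paper: both apply property iv of Theorem~\ref{theoremone} with $\vg=\Delta\vv$ and use Green's formula with the vanishing trace of $\vw_h\in[H^1_0(\Omega)]^d$ to identify $\skp{\Delta\vv,\vw_h}$ with $-\skp{\nabla\vv,\nabla\vw_h}$, the paper merely writing the computation in the reverse order by adding and subtracting $(\Delta\vv,\vw_h)$. Your extra step of replacing $\vw_h$ by $-\vw_h$ to upgrade the one-sided bound to the stated absolute-value estimate is a small but legitimate refinement that the paper's proof leaves implicit.
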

\begin{proof}
By calculating and applying \eqref{rec:propC}, one obtains
\begin{align*}
  (\Delta \vv, \Rec \vw_h) + (\nabla \vv, \nabla \vw_h) & = (\Delta \vv, \Rec \vw_h - \vw_h) + (\Delta \vv, \vw_h) + (\nabla \vv, \nabla \vw_h) \\
  & = (\Delta \vv, \Rec \vw_h - \vw_h) \leq  C_\mathrm{cons}  \normsmooth{\Delta \vv}_{k-2} \norm{\nabla \vw_h}{L^2(\Omega)}.
\end{align*}
\end{proof}

\begin{theorem} \label{Theorem:apriori:errors}
For the discrete solution $(\vu_h, p_h) \in \Vh \times Q_h$ in \eqref{disc:weak:stokes:problem} and the continuous solution $(\vu, p) \in (\vV, Q)$ of \eqref{cont:weak:stokes:problem},
assuming the regularity $\Delta \vu \in [L^2(\Omega)]^d$ 
the following a-priori errors hold
\begin{align}
 & i. && \norm{\nabla (\vu - \vu_h)}{L^2(\Omega)} \leq 2 (1+ C_F) \inf_{\vw_h \in \vV_h} \norm{\nabla (\vu - \vw_h)}{L^2(\Omega)} + C_\mathrm{cons} \normsmooth{\Delta \vu}_{k-2},\nonumber\\
 & ii. &&  \|\oswald\mathcal{P}_{\tQh} p - p_h\|_{L^2(\Omega)} \leq
     \frac{\nu}{\beta_h} \left ( \norm{\nabla (\vu - \vu_h)}{L^2(\Omega)} +  C_{\mathrm{cons}} \normsmooth{\Delta \vu}_{k-2} \right ),\label{bestapproxpressure_estimate}\\
 & iii. && \norm{p - p_h}{L^2(\Omega)} \leq \|p - \oswald \mathcal{P}_{\tQh} p\|_{L^2(\Omega)}\nonumber\\
 &      && \hspace{4cm}  + \frac{\nu}{\beta_h} \left ( \norm{\nabla (\vu - \vu_h)}{L^2(\Omega)} +  C_\mathrm{cons} \normsmooth{\Delta \vu}_{k-2} \right ).\nonumber
\end{align}
\end{theorem}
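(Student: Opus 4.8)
The plan is to establish the three bounds in order: \emph{i} by a modified C\'ea argument carried out on the discretely divergence-free subspace, \emph{ii} by the discrete LBB condition \eqref{discrete:LBB:cond} together with an exact cancellation supplied by Theorem \ref{theoremone}, and \emph{iii} by a one-line triangle inequality resting on \emph{ii}. A preliminary step common to \emph{i} and \emph{ii} is to record the strong form of the continuous problem: since $\Delta\vu\in[L^2(\Omega)]^d$, the weak equation \eqref{cont:weak:stokes:problem} gives $-\nu\Delta\vu-\nabla p=\bfe$ in $[L^2(\Omega)]^d$, so in particular $\nabla p\in[L^2(\Omega)]^d$ and $p\in H^1(\Omega)$, which legitimizes the integrations by parts below.

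For \emph{i} I would test the discrete momentum equation \eqref{disc:weak:stokes:problem} against an arbitrary $\vv_h\in\vV^0_h$. The pressure term $b(\vv_h,p_h)$ vanishes since $\vv_h$ is discretely divergence-free and $p_h\in\Qh$, leaving $a(\vu_h,\vv_h)=l(\Rec\vv_h)=(\bfe,\Rec\vv_h)$. Inserting the strong form and using that $\Rec\vv_h\in\Hdivoz{\Omega}$ has vanishing normal trace yields $(\nabla p,\Rec\vv_h)=-(p,\div\Rec\vv_h)=0$, the last equality being property \eqref{rec:propB} of Theorem \ref{theoremone} ($\div\Rec\vv_h=0$ for $\vv_h\in\vV^0_h$). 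Hence $a(\vu_h,\vv_h)=-\nu(\Delta\vu,\Rec\vv_h)$, and subtracting $a(\vu,\vv_h)=\nu(\nabla\vu,\nabla\vv_h)$ together with Lemma \ref{lemma:est:consist:error} produces the near-orthogonality
\[
  |a(\vu-\vu_h,\vv_h)|=\nu\,|(\Delta\vu,\Rec\vv_h)+(\nabla\vu,\nabla\vv_h)|\le \nu\,C_\mathrm{cons}\normsmooth{\Delta\vu}_{k-2}\|\nabla\vv_h\|_{L^2(\Omega)}.
\]
The rest is standard: for $\vw_h\in\vV^0_h$ set $\boldsymbol{z}_h:=\vw_h-\vu_h\in\vV^0_h$, expand $\nu\|\nabla\boldsymbol{z}_h\|^2=a(\vw_h-\vu,\boldsymbol{z}_h)+a(\vu-\vu_h,\boldsymbol{z}_h)$, bound the two terms by Cauchy--Schwarz and by the displayed estimate, divide by $\nu\|\nabla\boldsymbol{z}_h\|_{L^2(\Omega)}$, and combine with the triangle inequality $\|\nabla(\vu-\vu_h)\|\le\|\nabla(\vu-\vw_h)\|+\|\nabla\boldsymbol{z}_h\|$. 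Passing from the infimum over $\vV^0_h$ to the infimum over $\vV_h$ via Lemma \ref{lemma:V0h:2:Vh} supplies the factor $2(1+C_F)$.

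The heart of the proof is \emph{ii}. Writing $r_h:=\oswald\mathcal{P}_{\tQh}p-p_h\in\Qh\subset Q_h$, the discrete LBB condition reduces the claim to bounding $b(\vv_h,r_h)$ for $\vv_h\in\vV_h$. Eliminating $b(\vv_h,p_h)$ through the discrete equation and inserting the strong form gives $(\bfe,\Rec\vv_h)=-\nu(\Delta\vu,\Rec\vv_h)+(p,\div\Rec\vv_h)$. The decisive point is that $\div\Rec\vv_h\in\tQh$, so $(p,\div\Rec\vv_h)=(\mathcal{P}_{\tQh}p,\div\Rec\vv_h)$, and property \eqref{rec:propBBB} of Theorem \ref{theoremone} applied with $\tqh=\mathcal{P}_{\tQh}p$ turns this into $(\div\vv_h,\oswald\mathcal{P}_{\tQh}p)=b(\vv_h,\oswald\mathcal{P}_{\tQh}p)$. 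Consequently the $b(\vv_h,\oswald\mathcal{P}_{\tQh}p)$ contributions in $b(\vv_h,r_h)$ cancel \emph{exactly}, leaving $b(\vv_h,r_h)=\nu(\Delta\vu,\Rec\vv_h)+\nu(\nabla\vu_h,\nabla\vv_h)$. Rewriting this as $\nu[(\Delta\vu,\Rec\vv_h)+(\nabla\vu,\nabla\vv_h)]-\nu(\nabla(\vu-\vu_h),\nabla\vv_h)$ and applying Lemma \ref{lemma:est:consist:error} to the bracket and Cauchy--Schwarz to the remaining term yields the bound $\nu(\|\nabla(\vu-\vu_h)\|_{L^2(\Omega)}+C_\mathrm{cons}\normsmooth{\Delta\vu}_{k-2})\|\nabla\vv_h\|_{L^2(\Omega)}$; division by $\beta_h$ closes \emph{ii}, and \emph{iii} follows at once from $\|p-p_h\|_{L^2(\Omega)}\le\|p-\oswald\mathcal{P}_{\tQh}p\|_{L^2(\Omega)}+\|r_h\|_{L^2(\Omega)}$ and \emph{ii}.

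I expect the main obstacle to be precisely the exact cancellation in \emph{ii}: it hinges on the admissibility of $\tqh=\mathcal{P}_{\tQh}p$ as a test function in \eqref{rec:propBBB} (valid because $\mathcal{P}_{\tQh}p\in\tQh$) and on the sign of the pressure term surviving the integration by parts, which is why the absence of a spurious boundary contribution must be checked carefully. The latter follows from the fact that $\Rec\vv_h$ has vanishing normal trace, a built-in feature of the $H(\mathrm{div})$-conforming reconstruction space defining $\Rec$ in Section \ref{sec::ana}. Once the near-orthogonality estimate and this cancellation identity are secured, the remaining manipulations are entirely routine.
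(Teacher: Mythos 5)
Your proposal is correct and follows essentially the same route as the paper: part \emph{i} is the paper's C\'ea-type argument on $\vV^0_h$ (you merely isolate the Galerkin near-orthogonality as a separate step before estimating $\|\nabla(\vw_h-\vu_h)\|_{L^2(\Omega)}$), part \emph{ii} uses exactly the paper's cancellation of $(\mathcal{P}_{\tQh}p,\div\Rec\vv_h)$ against the pressure contribution of $\bfe$ via \eqref{rec:propBBB} followed by the discrete LBB condition, and part \emph{iii} is the same triangle inequality. The only cosmetic discrepancy is a sign convention for $\nabla p$ in the strong form (an ambiguity already present in the paper), which is immaterial since the relevant term vanishes in \emph{i} and cancels in \emph{ii}.
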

\begin{proof}
Note that from $\Delta \vu \in [L^2(\Omega)]^d$ and $\bfe \in [L^2]^d(\Omega)$ follows $p \in H^1(\Omega)$. 
i) For an arbitrary $\vv_h \in \vV^0_h$ we define $\vw_h := \vu_h - \vv_h \in \vV^0_h$.
\begin{align*}
  \nu \norm{\nabla \vw_h}{L^2(\Omega)}^2
  & = a(\vw_h, \vw_h) = a(\vu_h, \vw_h) - a(\vv_h, \vw_h)\\
  & = (-\nu \Delta \vu + \nabla p, \Rec \vw_h)  -  a(\vv_h, \vw_h)  \\
  & = a(\vu - \vv_h, \vw_h) - \nu
  \left( (\Delta \vu, \Rec \vw_h) + (\nabla \vu, \nabla \vw_h) \right),
\end{align*}
where it was used that $\div \Rec \vw_h = 0$ holds due to \eqref{rec:propB} and that thus
$\nabla p$ and $\Rec \vw_h$ are orthogonal in $L^2$.
Using Lemma \ref{lemma:est:consist:error} and the Cauchy--Schwarz inequality yields
\begin{equation*}
     \nu \norm{\nabla \vw_h}{L^2(\Omega)}^2
  \leq \nu \norm{\nabla(\vu - \vv_h)}{L^2(\Omega)} \norm{\nabla \vw_h}{L^2(\Omega)}
  + \nu C_\mathrm{cons}  \normsmooth{\Delta \vu}_{k-2} \norm{\nabla \vw_h}{L^2(\Omega)}.
\end{equation*}
Therefore it holds
$$
  \norm{\nabla \vw_h}{L^2(\Omega)} \leq \inf_{\vv_h \in \vV^0_h} \norm{\nabla(\vu - \vv_h)}{L^2(\Omega)} + C_\mathrm{cons} \normsmooth{\Delta \vu}_{k-2}.
$$
With the triangle inequality it follows
\begin{equation*}
  \norm{\nabla (\vu - \vu_h)}{L^2(\Omega)}
  \leq \norm{\nabla (\vu - \vv_h)}{L^2(\Omega)}
  +  \norm{\nabla \vw_h}{L^2(\Omega)}.
\end{equation*}
Applying Lemma \ref{lemma:V0h:2:Vh} yields the first statement.

ii) For proving the pressure error, one computes for an arbitrary $\vv_h \in \vV_h$
\begin{equation*}
\begin{split}
    (\oswald\mathcal{P}_{\tQh} p - p_h, \div \vv_h)
  & = (\oswald\mathcal{P}_{\tQh} p, \div \vv_h) + (\bfe, \Rec \vv_h)
  - a(\vu_h, \vv_h) \\
  & = (\mathcal{P}_{\tQh} p, \div \Rec \vv_h) +  (\nabla p, \Rec \vv_h)
  -(\nu \Delta \vu, \Rec \vv_h)
  - a(\vu_h, \vv_h) \\
  & =  -(\nu \Delta \vu, \Rec \vv_h) - a(\vu_h, \vv_h) \\
  & =  -(\nu \Delta \vu, \Rec \vv_h) - a(\vu, \vv_h) - a(\vu_h - \vu, \vv_h),
\end{split}
\end{equation*}
where \eqref{rec:propBBB} was used.
Using the discrete LBB condition \eqref{discrete:LBB:cond}, one concludes
$$
\|\oswald\mathcal{P}_{\tQh} p - p_h\|_{L^2(\Omega)} \leq
     \frac{\nu}{\beta_h} \left ( \norm{\nabla (\vu - \vu_h)}{L^2(\Omega)} +  C_{\mathrm{cons}} \normsmooth{\Delta \vu}_{k-2} \right ).
$$

iii)
The last statement follows by the triangle inequality.
\end{proof}
\begin{remark}
The statement i) in Theorem \ref{Theorem:apriori:errors} shows the pressure-robustness of the a-priori velocity error.
Interesting is also statement ii) in Theorem \ref{Theorem:apriori:errors}. It shows that also the pressure error is pressure-robust
in the sense that $p_h = \oswald \mathcal{P}_{\tQh}p$ up to an error, which is only velocity-dependent.
Note that this is completely analogous to pressure-robust mixed methods with discontinuous pressures \cite{lmt:2016, Linke21052016, blms:2015}. There, $Q_h$ and $\tQh$ coincide
and $p_h$ is even the {\em best approximation} of $p$ in $Q_h$ up to an error, which is also only velocity-dependent.
\end{remark}
\begin{corollary} \label{Corollary:th:orders}
  Assume a quasi-uniform triangulation $\Tc$ and a solution $\vu \in [H^{k+1}(\Omega)]^d$ and $p \in H^{k}(\Omega)$ of the continuous problem \eqref{cont:weak:stokes:problem}.
  Then, the solution $(\vu_h, p_h)$ of \eqref{disc:weak:stokes:problem} satisfies
  \begin{align} 
    \| \vu - \vu_h \|_{H^1(\Omega)} & \le \left(2(1+C_F) + C_\mathrm{cons}\right) h^{k} |\vu|_{H^{k+1}(\Omega)}, \quad \text{and}\label{vel:order:est}\\
    \| p - p_h \|_{L^2(\Omega)} & \le \frac{\nu\left(2(1+C_F) + 2C_\mathrm{cons}\right)}{\beta} h^{k} |\vu|_{H^{k+1}(\Omega)} + h^{k} |p|_{H^{k}(\Omega)}. \label{pres:order:est}
  \end{align}
\end{corollary}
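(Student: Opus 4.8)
The plan is to read both estimates directly off the abstract a-priori bounds in Theorem~\ref{Theorem:apriori:errors} and then insert standard finite element approximation results for the spaces $\vV_h$ and $Q_h$ together with the data-oscillation bound from the remark following Theorem~\ref{theoremone}. No new ideas beyond these ingredients should be needed; the work is entirely in matching exponents and collecting constants.

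For the velocity estimate \eqref{vel:order:est} I would start from Theorem~\ref{Theorem:apriori:errors}~i.\ and treat the two summands separately. The best-approximation term $\inf_{\vw_h \in \vV_h} \norm{\nabla(\vu - \vw_h)}{L^2(\Omega)}$ is controlled by a standard Lagrange interpolation estimate for the continuous velocity space $[\pol^k(\Tc)]^d \cap [C^0(\Omega)]^d$, yielding $\cle h^k |\vu|_{H^{k+1}(\Omega)}$ since $\vu \in [H^{k+1}(\Omega)]^d$. For the consistency term I note that $\vu \in [H^{k+1}(\Omega)]^d$ forces $\Delta \vu \in [H^{k-1}(\Omega)]^d$, so I apply the oscillation bound of the remark with $\vg = \Delta \vu$, $m = k-2$ and $l = k-1$; the exponent is $\min\{m+2,l+1\} = \min\{k,k\} = k$, hence $\normsmooth{\Delta \vu}_{k-2} \cle h^k |\Delta \vu|_{H^{k-1}(\Omega)} \cle h^k |\vu|_{H^{k+1}(\Omega)}$. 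Collecting the factors $2(1+C_F)$ and $C_\mathrm{cons}$ in front of the two contributions gives the claim.

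For the pressure estimate \eqref{pres:order:est} I would invoke Theorem~\ref{Theorem:apriori:errors}~iii. The velocity-dependent part $\tfrac{\nu}{\beta_h}\bigl(\norm{\nabla(\vu-\vu_h)}{L^2(\Omega)} + C_\mathrm{cons}\normsmooth{\Delta \vu}_{k-2}\bigr)$ is already dominated by the velocity estimate just proved together with the same oscillation bound, which supplies the factor $2(1+C_F) + 2C_\mathrm{cons}$; bounding $\beta_h$ from below by the uniform inf-sup constant $\beta$ replaces $\beta_h$ by $\beta$ in the denominator. The remaining term $\|p - \oswald \mathcal{P}_{\tQh} p\|_{L^2(\Omega)}$ must then be shown to be of best-approximation quality $\cle h^k |p|_{H^k(\Omega)}$ for the continuous pressure space $Q_h = \pol^{k-1}(\Tc) \cap C^0(\Omega)$.

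The main obstacle is precisely this last term, since $\oswald \mathcal{P}_{\tQh}$ is a nonlocal composite operator and not a genuine interpolation onto $Q_h$. My plan is to reduce it to best approximation using $Q_h \subset \tQh$ and the identity property \eqref{oswaldid}: for any $q_h \in Q_h$ one has $\mathcal{P}_{\tQh} q_h = q_h$ and $\oswald q_h = q_h$, so $\oswald \mathcal{P}_{\tQh} p - q_h = \oswald \mathcal{P}_{\tQh}(p - q_h)$. Since the $L^2$-projection $\mathcal{P}_{\tQh}$ is an $L^2$-contraction and the Oswald averaging operator $\oswald$ is $L^2$-stable on quasi-uniform meshes (see \cite{oswald, ernguermond15}), the triangle inequality yields $\|p - \oswald \mathcal{P}_{\tQh} p\|_{L^2(\Omega)} \cle \|p - q_h\|_{L^2(\Omega)}$ uniformly in $h$. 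Taking the infimum over $q_h \in Q_h$ and applying the standard estimate $\inf_{q_h \in Q_h}\|p - q_h\|_{L^2(\Omega)} \cle h^k |p|_{H^k(\Omega)}$, valid because $p \in H^k(\Omega)$, closes the argument. Adding the two pressure contributions then produces \eqref{pres:order:est}.
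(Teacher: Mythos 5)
Your proposal is correct and follows the same route as the paper, whose proof is simply the one-line remark that the corollary ``follows by Theorem \ref{Theorem:apriori:errors} and standard scaling arguments''; you have filled in exactly those scaling arguments (interpolation in $\vV_h$, the oscillation bound with $m=k-2$, $l=k-1$, and the $L^2$-stability of $\oswald\mathcal{P}_{\tQh}$ combined with \eqref{oswaldid} to reduce $\|p-\oswald\mathcal{P}_{\tQh}p\|_{L^2(\Omega)}$ to best approximation in $Q_h$). The only caveat is cosmetic: the stated constants are then only correct up to the generic shape-regularity factors hidden in $\cle$, which is a sloppiness already present in the paper's statement.
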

\begin{proof}
Follows by Theorem \ref{bestapproxpressure_estimate} and standard scaling arguments.
\end{proof}
\begin{remark}
In order to increase the accuracy of the solution one may want to use a local refinement of the mesh $\Tc$. This is indeed possible with the modified method due to local properties of the {\it data oscillation}.
\end{remark}
\begin{corollary}
  Under the assumptions of Theorem \ref{Theorem:apriori:errors}, Corollary \ref{Corollary:th:orders} and the convexity of $\Omega$ it holds
  \begin{align*}
    \| \vu - \vu_h \|_{L^2(\Omega)} \cle  h^{k+1} |\vu|_{H^{k+1}(\Omega)}.
  \end{align*}
\end{corollary}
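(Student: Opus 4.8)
The plan is to run an Aubin--Nitsche duality argument, the point being that the pressure contribution drops out entirely because the reconstruction maps discretely divergence-free functions to exactly divergence-free ones. Write $\boldsymbol{e} := \vu - \vu_h$ and let $(\boldsymbol{z},r) \in \vV \times Q$ solve the dual Stokes problem $a(\vv,\boldsymbol{z}) + b(\vv,r) = \skp{\boldsymbol{e},\vv}$ for all $\vv \in \vV$ and $b(\boldsymbol{z},q)=0$ for all $q \in Q$. Since $\Omega$ is convex, elliptic regularity yields $\boldsymbol{z} \in [H^2(\Omega)]^d$ and $r \in H^1(\Omega)$ with $\norm{\boldsymbol{z}}{H^2(\Omega)} + \norm{r}{H^1(\Omega)} \cle \norm{\boldsymbol{e}}{L^2(\Omega)}$. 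Testing with $\vv = \boldsymbol{e}$ gives $\norm{\boldsymbol{e}}{L^2(\Omega)}^2 = a(\boldsymbol{e},\boldsymbol{z}) + b(\boldsymbol{e},r)$, so it remains to bound the right-hand side by $C h^{k+1}|\vu|_{H^{k+1}(\Omega)}\norm{\boldsymbol{e}}{L^2(\Omega)}$.

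First I would dispose of the two harmless terms. For $b(\boldsymbol{e},r)$, since $\div \vu = 0$ and $\vu_h \in \vV^0_h$ is discretely divergence-free, $b(\boldsymbol{e},r) = -\skp{r-r_h,\div \vu_h}$ for every $r_h \in Q_h$; choosing $r_h$ the best $L^2$-approximation of $r$ and using $\norm{\div \vu_h}{L^2(\Omega)} = \norm{\div(\vu_h-\vu)}{L^2(\Omega)} \cle \norm{\nabla \boldsymbol{e}}{L^2(\Omega)}$ together with Corollary~\ref{Corollary:th:orders} and $\norm{r}{H^1(\Omega)} \cle \norm{\boldsymbol{e}}{L^2(\Omega)}$ yields $\cle h^{k+1}|\vu|_{H^{k+1}(\Omega)}\norm{\boldsymbol{e}}{L^2(\Omega)}$. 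Next I split $a(\boldsymbol{e},\boldsymbol{z}) = a(\boldsymbol{e},\boldsymbol{z}-\boldsymbol{z}_h) + a(\boldsymbol{e},\boldsymbol{z}_h)$, where $\boldsymbol{z}_h \in \vV^0_h$ is a Fortin-type (quasi-)interpolant of $\boldsymbol{z}$; note $\fortin \boldsymbol{z} \in \vV^0_h$ precisely because $\div \boldsymbol{z}=0$, and I require the local approximation estimate $\norm{\nabla(\boldsymbol{z}-\boldsymbol{z}_h)}{L^2(\Omega)} \cle h|\boldsymbol{z}|_{H^2(\Omega)}$. Cauchy--Schwarz and Corollary~\ref{Corollary:th:orders} then give $a(\boldsymbol{e},\boldsymbol{z}-\boldsymbol{z}_h) \cle \nu h^k|\vu|_{H^{k+1}(\Omega)}\, h|\boldsymbol{z}|_{H^2(\Omega)} \cle h^{k+1}|\vu|_{H^{k+1}(\Omega)}\norm{\boldsymbol{e}}{L^2(\Omega)}$.

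The heart of the matter is $a(\boldsymbol{e},\boldsymbol{z}_h)$, and here the pressure simply never enters. By definition $a(\vu,\boldsymbol{z}_h) = \nu\skp{\nabla \vu,\nabla \boldsymbol{z}_h}$, while for $\boldsymbol{z}_h \in \vV^0_h$ the discrete equation \eqref{disc:weak:stokes:problem} gives $a(\vu_h,\boldsymbol{z}_h) = l(\Rec \boldsymbol{z}_h) = \skp{-\nu\Delta \vu + \nabla p,\Rec \boldsymbol{z}_h} = -\nu\skp{\Delta \vu,\Rec \boldsymbol{z}_h}$, exactly as in the proof of Theorem~\ref{Theorem:apriori:errors}: the term $b(\boldsymbol{z}_h,p_h)$ vanishes since $\boldsymbol{z}_h \in \vV^0_h$, and $\skp{\nabla p,\Rec \boldsymbol{z}_h} = -\skp{p,\div \Rec \boldsymbol{z}_h} = 0$ because $\div \Rec \boldsymbol{z}_h = 0$ by \eqref{rec:propB} (the boundary term vanishing as $\Rec \boldsymbol{z}_h$ has zero normal trace). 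Subtracting, $a(\boldsymbol{e},\boldsymbol{z}_h) = \nu\big(\skp{\nabla \vu,\nabla \boldsymbol{z}_h} + \skp{\Delta \vu,\Rec \boldsymbol{z}_h}\big)$, which is precisely the consistency quantity controlled by Lemma~\ref{lemma:est:consist:error}.

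The main obstacle is that Lemma~\ref{lemma:est:consist:error} alone only gives $|a(\boldsymbol{e},\boldsymbol{z}_h)| \cle \nu C_{\mathrm{cons}}\normsmooth{\Delta \vu}_{k-2}\norm{\nabla \boldsymbol{z}_h}{L^2(\Omega)} \cle h^k|\vu|_{H^{k+1}(\Omega)}\norm{\boldsymbol{e}}{L^2(\Omega)}$, one power of $h$ short, because the factor $\norm{\nabla \boldsymbol{z}_h}{L^2(\Omega)} \approx \norm{\boldsymbol{e}}{L^2(\Omega)}$ does not decay. To recover the missing order one must exploit the $H^2$-smoothness of the dual velocity: writing $a(\boldsymbol{e},\boldsymbol{z}_h) = -\nu\skp{\Delta \vu,\boldsymbol{z}_h - \Rec \boldsymbol{z}_h}$ and using the patchwise orthogonality behind \eqref{rec:propC}, the local difference $\boldsymbol{z}_h - \Rec \boldsymbol{z}_h$ should be estimated not by $h_V\norm{\nabla \boldsymbol{z}_h}{L^2(\ov)}$ but by $h_V^2|\boldsymbol{z}|_{H^2(\ov)}$. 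This sharper bound is available provided $\Rec$ reproduces (locally) affine velocity fields, so that $\boldsymbol{z}_h - \Rec \boldsymbol{z}_h$ sees only the second-order oscillation of $\boldsymbol{z}$; a Cauchy--Schwarz over the vertex patches then pairs $h_V^2|\boldsymbol{z}|_{H^2(\ov)}$ against $\normlo{\Delta \vu - \mathcal{P}^{k-2}_{\ov}\Delta \vu}$ to obtain $|a(\boldsymbol{e},\boldsymbol{z}_h)| \cle \normsmooth{\Delta \vu}_{k-2}\,h\,|\boldsymbol{z}|_{H^2(\Omega)} \cle h^{k+1}|\vu|_{H^{k+1}(\Omega)}\norm{\boldsymbol{e}}{L^2(\Omega)}$. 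Collecting the three contributions and dividing by $\norm{\boldsymbol{e}}{L^2(\Omega)}$ gives the claim. I expect the delicate point to be establishing this local polynomial-reproduction of $\Rec$ from the explicit vertex-patch problems of Sections~\ref{sec::ana}--\ref{section::mini}, since it is an extra consistency property beyond those recorded in Theorem~\ref{theoremone}.
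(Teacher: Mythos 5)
Your overall route coincides with the paper's: an Aubin--Nitsche duality argument in which the dual pressure contribution is harmless and the only delicate piece is the consistency term $a(\boldsymbol{e},\boldsymbol{z}_h)=\nu\,\skp{\Delta\vu,\,\Rec\boldsymbol{z}_h-\boldsymbol{z}_h}$; you also correctly diagnose that Lemma~\ref{lemma:est:consist:error} applied naively is one power of $h$ short. However, the step you propose to close that gap is left unproven and, as stated, rests on a property that $\Rec$ does not have. You want $\Rec$ to reproduce locally affine velocity fields. For an affine $\vw$ with $\div\vw=c\neq 0$ the right-hand side of the local problem \eqref{patchproblem} is $c\into\bubb_V(\psi_h-\oswald\psi_h)\intd x$, which has no reason to vanish, so in general $\sigV\neq 0$ and affine fields are \emph{not} reproduced. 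Since you yourself flag this reproduction property as ``the delicate point'' you have not established, the crux of the proof is open.

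The property that actually does the job --- and is precisely what the paper's one-line proof invokes when it observes that $\Rec\vw=\vw$ for all $\vw\in\vV^0$ --- is reproduction of \emph{divergence-free} fields, and it requires nothing beyond Theorem~\ref{localtheorem}: the local correction $\sigV$ is driven solely by $\div\boldsymbol{z}_h$ and obeys $\normlo{\sigV}\cle h_V\normlo{\div\boldsymbol{z}_h}$ by \eqref{lemmaonepropone}. Because the dual velocity satisfies $\div\boldsymbol{z}=0$, one gets $\normlo{\div\boldsymbol{z}_h}=\normlo{\div(\boldsymbol{z}_h-\boldsymbol{z})}\cle h_V\lvert\boldsymbol{z}\rvert_{H^2(\ov)}$ for an interpolant with local approximation properties, hence $\normlo{\sigV}\cle h_V^2\lvert\boldsymbol{z}\rvert_{H^2(\ov)}$. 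Pairing this against $\normlo{\Delta\vu-\mathcal{P}^{k-2}_{\ov}\Delta\vu}$ via the orthogonality \eqref{lemmaonepropthree} and summing over the patches yields
\begin{align*}
  \lvert a(\boldsymbol{e},\boldsymbol{z}_h)\rvert \cle h\,\normsmooth{\Delta\vu}_{k-2}\,\lvert\boldsymbol{z}\rvert_{H^2(\Omega)} \cle h^{k+1}\lvert\vu\rvert_{H^{k+1}(\Omega)}\norm{\boldsymbol{e}}{L^2(\Omega)},
\end{align*}
which is exactly the bound you were after. A residual point in either version: the discretely divergence-free interpolant $\boldsymbol{z}_h$ must have \emph{local} first-order $H^1$-approximation of $\boldsymbol{z}$, which Lemma~\ref{lemma:V0h:2:Vh} alone does not supply; the paper delegates this bookkeeping, together with the estimates above, to the arguments of the earlier work on discontinuous-pressure elements that it cites.
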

\begin{proof}
The proof follows by an Aubin--Nitsche argument \cite{brezzifortin, aubin1972, NITSCHE1968}.
For an arbitrary $\vg \in [L^2(\Omega)]^d$ one employs a dual Stokes problem with a solution $\vu_{\vg} \in \vV^0 \cap [H^2(\Omega)]^d$.
Extending the domain of definition of the reconstruction operator $\Rec$ to $\vV^0$ one sees at once that it holds $\Rec \vw = \vw$ for all $\vw \in \vV^0$.   
Then, $\Rec \vu_{\vg} = \vu_{\vg}$ and the arguments in \cite{lmt:2016} deliver the desired optimal pressure-robust $L^2$-estimate.
\end{proof}

\section{Construction and analysis of the reconstruction operator} \label{sec::ana}

\subsection{Definition of the operators and spaces}
In this section we define local problems on each vertex patch $\ov$ and proof theorem \ref{theoremone}. For an arbitrary vertex $V \in \Vert$ we start by defining the spaces
\begin{align*}
  \Sigho (\Tc_{\ov}) &:= \{ \sig \in \RT{k-1}(\Tc_{\ov}): \tracen \sig = 0 \ondom \partial \ov  \} \subset \Hdivoz{\ov} \\
  \tQh (\Tc_{\ov}) &:= \pol^{k-1}(\Tc_{\ov}) \subset L^2(\ov) \qquad   \tQh^0 (\Tc_{\ov}) := \tQh (\Tc_{\ov}) \cap L^2_0(\ov),
\end{align*}
where $\RT{k-1}$ is the Raviart-Thomas space of order $k-1$ see \cite{brezzifortin} and \cite{RTelements} , and for $k \ge 3$ using the Koszul operator also
\begin{align*}
   \Wh (\ov) &:= \kos_{\vec{x} - V}(\pol^{k-3}(\ov)) \subset \Lambda_V :=  \kos_{\vec{x} - V}(L^2(\ov)) \quad \text{for } \quad d=2 \\
  \Wh (\ov) &:= \kos_{\vec{x} - V}([\pol^{k-3}(\ov)]^3)  \subset  \Lambda_V := \kos_{\vec{x} - V}([L^2(\ov)]^3) \quad \text{for } \quad d=3.
\end{align*}
Note that $\tQh$ consists of element-wise polynomials and $\pol^{k-3}(\ov)$ are polynomials on the patch. Furthermore we have the property
\begin{align} \label{divSigeqtQ}
 \div{\Sigho (\Tc_{\ov})} = \tQh^0 (\Tc_{\ov}). 
\end{align}
We continue with the definition of the bilinearform $\blfB: (\Hdivoz{\ov} \times L^2_0(\ov) \times \Lambda_V) \times (\Hdivoz{\ov} \times L^2_0(\ov) \times \Lambda_V) \rightarrow \R$ by
\begin{multline*}
  \blfB((\sigb, \phi, \boldsymbol{\lambda}), (\boldsymbol{\tau}, \psi, \boldsymbol{\mu})) := \\
  \into \sigb \cdot \boldsymbol{\tau} \intd x + \into \div{\boldsymbol{\tau}} \phi \intd x +  \into \boldsymbol{\tau} \cdot \boldsymbol{\lambda} \intd x
  + \into \div{\sigb} \psi \intd x + \into \sigb \cdot \boldsymbol{\mu} \intd x.
\end{multline*}
Now let $T$ be an arbitrary element $T \in \Tc$, and $\Vert_T$  be the set of vertices of $T$ with $N_T := |\Vert_T|$.  Let $\{\phi_j\}_{j=1}^{N_T}$ be the local (Lagrangian) basis on $T$ for the interpolation points $\{x_j\}_{j=1}^{N_T}$ and $\{q_j\}_{j=1}^{N_T}$ be the coefficients of an arbitrary $q \in \pol^{k-1}(T)$, so 
  \begin{align*}
    \phi_j(x_l) = \delta_{jl} \quad \forall j,l=1,\dots,N_T \qquad \andtext \qquad q(x) = \sum\limits_{j=1}^{N_T} q_j \phi_j(x).
  \end{align*}
  Then we define for each $V \in \Vert_T$ an operator $\bubb_{T,V} : \pol^{k-1}(T) \rightarrow \pol^{k-1}(T)$ by setting the coefficients as
  \begin{align} \label{coeffbubb}
     (\bubb_{T, V} q)_j=  q_j \lambda_{V}(x_j),
  \end{align}
  where $\lambda_{V}$ is the barycentric coordinate function of the vertex $V$. Figure~\ref{fig:vis_bubbleprojector} visualizes the change in the
  coefficients for a quadratic polynomial in two dimensions.
  It holds
\begin{align} \label{bubbprojprop}
  \trace \bubb_{T,V} q = 0 ~ \ondom ~  F_{op} \qquad  \andtext  \qquad  \sum\limits_{V \in \Vert_T} \bubb_{T,V} q = q,
\end{align}
where $F_{op}$ is the opposite edge of $V$ for $d = 2$ and the opposite face for $d=3$. Using a trivial extension by $0$ on $\Omega \setminus T$, we can expand the range of $\bubb_{T,V}$ on $\tQh(\Tc)$. By that we define for every vertex $V$ the {\it bubble projector}  $\bubb_V: \tQh(\Tc) \rightarrow \tQh(\Tc)$ as
\begin{align} \label{bubbproj}
  \bubb_V \tqh := \sum\limits_{T \in \TcV} \bubb_{T,V} \tqh \quad \forall \tqh \in \tQh(\Tc),
\end{align}
with the property
\begin{align} \label{bubbprojB}
  \trace \bubb_V \tqh & = 0 \quad \ondom \quad \partial \ov. \\
  \bubb_V \tqh &= 0 \quad \ondom \quad \Omega \setminus \ov.
\end{align}

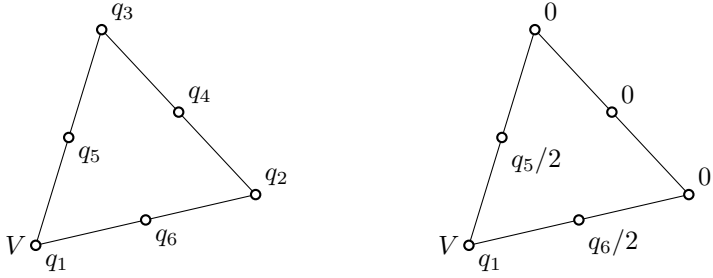
\begin{figure}

\tikzstyle{point}=[
inner sep=1.25pt,
fill=white,
draw,
circle,
thick
]
\hfill
 \begin{tikzpicture}[scale=1.5, rotate=13]
\path (0,0) coordinate (P1)
      (2,0) coordinate (P2)
      (1,1.732) coordinate (P3)
      (P1)--(P2) coordinate[midway] (E3)
      (P2)--(P3) coordinate[midway] (E1)
      (P3)--(P1) coordinate[midway] (E2);
\draw (P1)--(P2)--(P3)--(P1);

\foreach \x in {P1,P2,P3,E1,E2,E3}
{%
\path (\x) coordinate[point];
}
\node at (P1)[left]{$V$};
\node at (P1)[below right]{$q_1$};
\node at (P2)[above right]{$q_2$};
\node at (P3)[above right]{$q_3$};
\node at (E1)[above right]{$q_4$};
\node at (E2)[below right]{$q_5$};
\node at (E3)[below right]{$q_6$};
 \end{tikzpicture}
 \hfill
  \begin{tikzpicture}[scale=1.5, rotate=13]
\path (0,0) coordinate (P1)
      (2,0) coordinate (P2)
      (1,1.732) coordinate (P3)
      (P1)--(P2) coordinate[midway] (E3)
      (P2)--(P3) coordinate[midway] (E1)
      (P3)--(P1) coordinate[midway] (E2);
\draw (P1)--(P2)--(P3)--(P1);

\foreach \x in {P1,P2,P3,E1,E2,E3}
{%
\path (\x) coordinate[point];
}
\node at (P1)[left]{$V$};
\node at (P1)[below right]{$q_1$};
\node at (P2)[above right]{$0$};
\node at (P3)[above right]{$0$};
\node at (E1)[above right]{$0$};
\node at (E2)[below right]{$q_5/2$};
\node at (E3)[below right]{$q_6/2$};
 \end{tikzpicture}
 \hfill\hfill
\caption{Visualisation of the nodal coefficients \(q_1,\ldots,q_6\) of a quadratic polynomial $q \in \pol^{2}(T)$ (left)
and the coefficients of its bubble projector $\bubb_{T,V} q$ (right) on a triangle $T$ with respect to the
vertex $V$.\label{fig:vis_bubbleprojector}}
\end{figure}

\begin{figure}[h]
  \centering
  \subfloat[Arbitrary polynomial function $\tqh$]
  {\includegraphics[width=0.25\paperwidth, clip=true, trim=5cm 5cm 8cm 1cm]
    {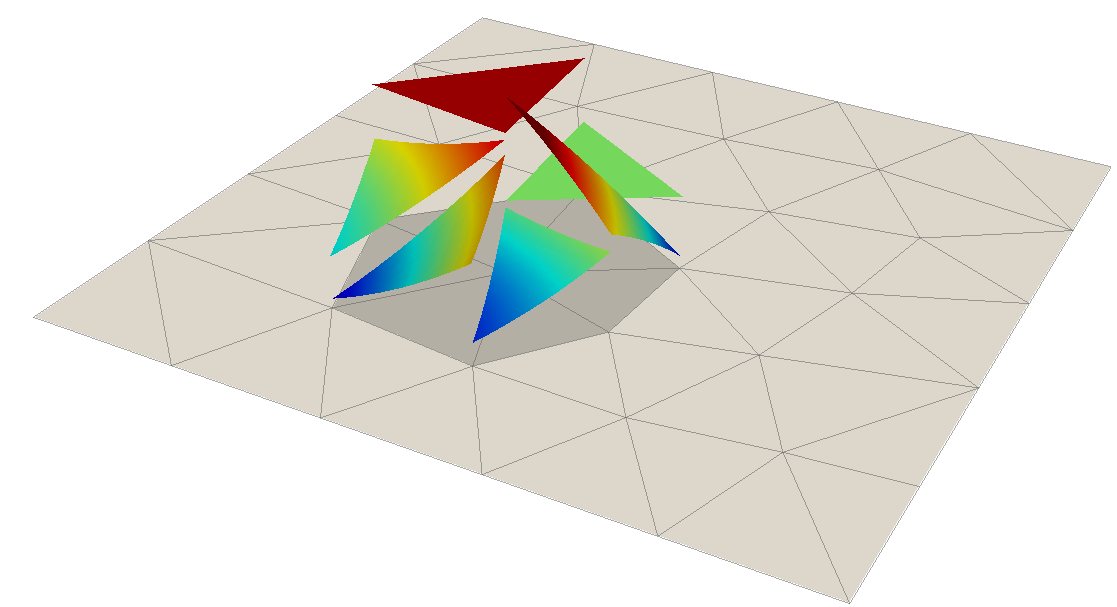}}
  \hspace*{10pt}
  \subfloat[Applying the bubble Projector $\bubb_V \tqh$]
  {\includegraphics[width=0.25\paperwidth , clip=true, trim=5.1cm 4.9cm 8cm 0.9cm]
    {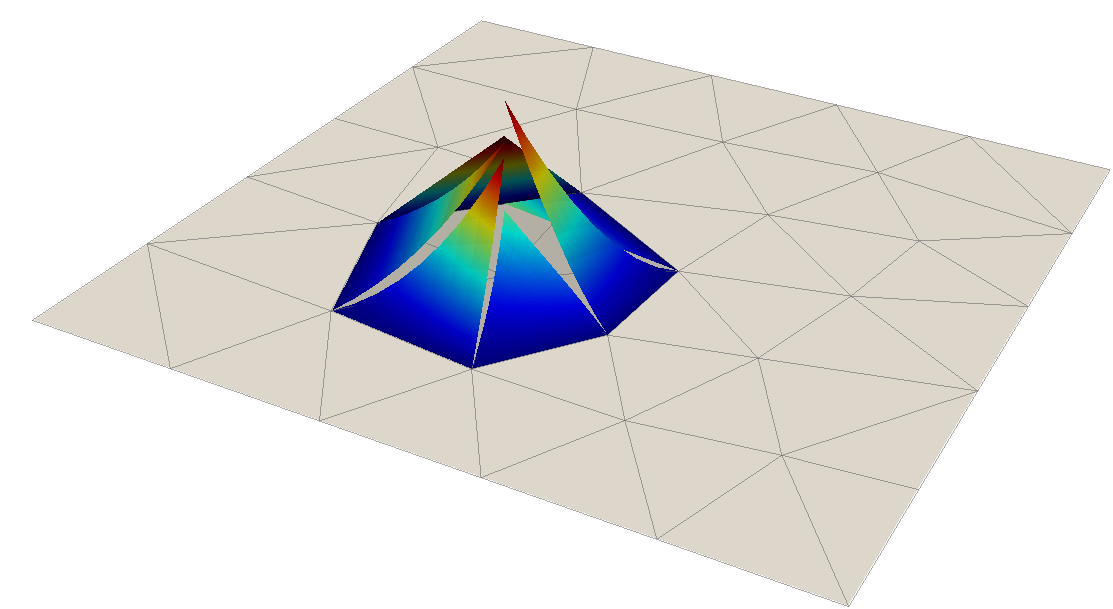}}
  \caption{An example for the bubble projector on $\ov$ (dark gray)}\label{pic::bubbproj}
\end{figure}
In Figure \ref{pic::bubbproj} an example of a projected arbitrary $\tqh \in \tQh$ is given.

\begin{remark}
  More complicated, but polynomial-robust {\it bubble projectors} are given in \cite{precondpversion} and \cite{bubbletransform}. If this robustness is an issue, these operators could be used instead of $\bubb_V$.
\end{remark}
\subsection{Definition of the local problem}
On the vertex patch, we define the problem: For a given function $\div{\w_h} \in \tQh(\Tc_{\ov})$ find $(\sigV, \phi_h, \boldsymbol{\lambda}_h) \in (\Sigho (\Tc_{\ov}) \times \tQh^0 (\Tc_{\ov}) \times \Wh(\ov))$ so that
\begin{align}\label{patchproblem}
  \blfB((\sigV, \phi_h, \boldsymbol{\lambda}_h), (\boldsymbol{\tau}_h, \psi_h, &\boldsymbol{\mu}_h)) = \skpV{\div{\w_h}, \bubb_V \left( \psi_h - \oswald \psi_h \right)} \\
                                                                        &\forall (\boldsymbol{\tau}_h, \psi_h, \boldsymbol{\mu}_h) \in \Sigho (\Tc_{\ov}) \times \tQh^0 (\Tc_{\ov}) \times \Wh(\ov). \nonumber
\end{align}
\begin{theorem} \label{localtheorem}
  Equation \ref{patchproblem} has a unique solution $(\sigV, \phi_h, \boldsymbol{\lambda}_h)$ satisfying
  \begin{align}
    & i.  && \normlo{\sigV} \cle h_V \normlo{\div{\w_h}}, \label{lemmaonepropone}\\
    & ii. && \skp{\div{\sigV}, \tqh} = \skpV{\div{\w_h}, \bubb_V \left( \tqh - \oswald \tqh \right)} \quad \forall \tqh \in \tQh(\Tc) \label{lemmaoneproptwo}\\
    &     && \text{ where $\sigV$ was trivially extended by 0 on $\Omega$,}\nonumber\\
    & iii. && \text{and the solution is $L^2(\ov)$-orthogonal to polynomials of order $k-2$, i.e. }\nonumber\\
    &      && \skpV{\sigV, \xi} = 0 \quad \forall \xi \in [\pol^{k-2}(\ov)]^d.\label{lemmaonepropthree}
  \end{align}
\end{theorem}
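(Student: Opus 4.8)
The plan is to read \eqref{patchproblem} as a symmetric double saddle-point problem in which $\sigV$ is the primal unknown and $(\phi_h,\boldsymbol{\lambda}_h)\in\tQh^0(\Tc_{\ov})\times\Wh(\ov)$ are two Lagrange multipliers: the first equation (test function $\boldsymbol{\tau}_h$) couples all three fields through $\blfB$, the second (test function $\psi_h$) fixes $\div\sigV$, and the third (test function $\boldsymbol{\mu}_h$) enforces $\skpV{\sigV,\boldsymbol{\mu}_h}=0$ for all $\boldsymbol{\mu}_h\in\Wh(\ov)$. I would establish well-posedness through the Brezzi conditions. The form $\skpV{\sigb,\boldsymbol{\tau}}$ is trivially coercive on the constraint kernel: any $\boldsymbol{\tau}$ in that kernel satisfies $\skpV{\div\boldsymbol{\tau},\psi}=0$ for all $\psi\in\tQh^0(\Tc_{\ov})$, and since $\div\boldsymbol{\tau}\in\tQh^0(\Tc_{\ov})$ by \eqref{divSigeqtQ} this forces $\div\boldsymbol{\tau}=0$, so the $L^2$ form equals the full $H(\div)$ norm there. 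The delicate point, and what I expect to be the main obstacle, is the combined inf-sup condition for the constraint operator $\boldsymbol{\tau}\mapsto(\div\boldsymbol{\tau},\,\skpV{\boldsymbol{\tau},\cdot}|_{\Wh})$. Surjectivity onto the $\div$-component is exactly \eqref{divSigeqtQ}; the genuinely new ingredient is that no nonzero $\boldsymbol{\mu}\in\Wh(\ov)$ is $L^2(\ov)$-orthogonal to the divergence-free fields of $\Sigho(\Tc_{\ov})$. I would prove this using the discrete de Rham sequence on the patch (so divergence-free, zero-normal-trace Raviart--Thomas fields are curls of boundary-vanishing potentials), integration by parts, and the fact that $\mathrm{rot}\circ\kos_{\vec{x}-V}$ is an isomorphism on polynomials, together with the properties of the Koszul complex collected in the appendix for $d=3$.

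Once well-posedness is in place, statement \eqref{lemmaonepropone} follows from the Brezzi stability estimate plus a scaling argument to a reference patch of unit diameter. The second equation shows that $\div\sigV\in\tQh^0(\Tc_{\ov})$ is the $L^2(\ov)$-Riesz representative of the functional $\psi_h\mapsto\skpV{\div\w_h,\bubb_V(\psi_h-\oswald\psi_h)}$, whose dual norm is bounded by $\normlo{\div\w_h}$ because $\bubb_V$ and $\oswald$ are $L^2$-stable; hence $\normlo{\div\sigV}\cle\normlo{\div\w_h}$. Splitting $\sigV=\boldsymbol{\zeta}+\boldsymbol{\eta}$ into a particular preimage $\boldsymbol{\zeta}$ of its divergence, with $\normlo{\boldsymbol{\zeta}}\cle h_V\normlo{\div\sigV}$ from the scaled right inverse of $\div$, and a divergence-free remainder $\boldsymbol{\eta}$, I would bound $\boldsymbol{\eta}$ through the $\Wh(\ov)$-moment inf-sup established above, whose constant is scale-invariant; the factor $h_V$ then emerges as claimed.

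For \eqref{lemmaoneproptwo} I would start from the second equation of \eqref{patchproblem}, which already gives the identity for all $\psi_h\in\tQh^0(\Tc_{\ov})$, i.e.\ for mean-value-free test functions, and extend it to arbitrary $\tqh\in\tQh(\Tc)$. Because $\sigV\in\Sigho(\Tc_{\ov})$ has vanishing normal trace on $\partial\ov$, its extension by zero lies in $\Hdivoz{\Omega}$ with no singular interface contribution, so $\skp{\div\sigV,\tqh}=\skpV{\div\sigV,\tqh}$ and only $\tqh|_{\ov}$ matters. Writing $\tqh|_{\ov}$ as its $L^2(\ov)$-mean plus a mean-free part, the mean-free part is handled by the second equation, while for the constant part both sides vanish: the left-hand side because $\into\div\sigV=0$ by the zero normal trace, and the right-hand side because constants lie in $\Qh$, so $\oswald$ acts as the identity on them by \eqref{oswaldid} and $\bubb_V(c-\oswald c)=0$. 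This yields \eqref{lemmaoneproptwo}.

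Finally, \eqref{lemmaonepropthree} rests on the Koszul-complex decomposition $[\pol^{k-2}(\ov)]^d=\nabla\pol^{k-1}(\ov)\oplus\kos_{\vec{x}-V}(\pol^{k-3}(\ov))$, the $d=3$ version being precisely what the appendix supplies, together with $\Wh(\ov)=\kos_{\vec{x}-V}(\pol^{k-3})$. Given $\xi\in[\pol^{k-2}(\ov)]^d$ I would split $\xi=\nabla r+\kos_{\vec{x}-V}(s)$. The Koszul term is annihilated immediately, since $\kos_{\vec{x}-V}(s)\in\Wh(\ov)$ and $\sigV$ satisfies the third equation of \eqref{patchproblem}. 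For the gradient term, integration by parts together with the vanishing normal trace of $\sigV$ on $\partial\ov$ removes the boundary integral and gives $\skpV{\sigV,\nabla r}=-\skpV{\div\sigV,r}$; as $r$ is (the restriction of) a global polynomial of degree at most $k-1$, it lies in $\Qh$ and coincides with its Oswald interpolant by \eqref{oswaldid}, so \eqref{lemmaoneproptwo} (or directly the second equation after removing the mean value) makes the right-hand side $\skpV{\div\w_h,\bubb_V(r-\oswald r)}=0$. Hence $\skpV{\sigV,\xi}=0$, completing the proof.
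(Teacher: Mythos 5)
Your proposal follows essentially the same route as the paper's proof: well-posedness via the Brezzi conditions with $\sigV$ as primal unknown and $(\phi_h,\boldsymbol{\lambda}_h)$ as multipliers, kernel ellipticity from the fact that $\div{\boldsymbol{\tau}_h}\in\tQh^0(\TcV)$ must vanish on the constraint kernel, the extension of the divergence identity from $\tQh^0(\TcV)$ to all of $\tQh(\Tc)$ by testing with constants (zero normal trace on one side, $\oswald c=c$ and hence $\bubb_V(c-\oswald c)=0$ on the other), and the Koszul splitting $[\pol^{k-2}(\ov)]^d=\nabla\pol^{k-1}(\ov)\oplus\Wh(\ov)$ combined with integration by parts and the third equation for part iii --- all of this matches the paper step for step. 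The one place you genuinely diverge is the inf-sup condition for the $\Wh(\ov)$-constraint: you argue abstractly that no nonzero $\boldsymbol{\mu}_h\in\Wh(\ov)$ is $L^2(\ov)$-orthogonal to the divergence-free fields of $\Sigho(\TcV)$ (discrete de Rham sequence, finite dimensionality, scaling), whereas the paper constructs the explicit divergence-free candidate $-\curl{(\lambda_V\boldsymbol{\xi}_h)}$ (resp.\ $-\Curl{(\lambda_V\xi_h)}$ in 2D), uses the appendix theorem to assume $\div{\boldsymbol{\xi}_h}=0$, and computes $b_2$ against it by hand to get a lower bound $\cge\normlo{\boldsymbol{\xi}_h}^2$; the explicit computation is exactly what certifies your injectivity claim and makes the $h$-uniformity of the constant transparent, so a fleshed-out version of your sketch would essentially reproduce it. Likewise, your extraction of the factor $h_V$ in part i (dual-norm bound on $\div{\sigV}$ from the second equation plus a scaled right inverse of the divergence) is a harmless repackaging of the paper's choice of the scaled norm $\frac{1}{h_V}\normlo{\cdot}$ on $\tQh(\TcV)$, from which the same bound drops out of the standard saddle-point stability estimate. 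No gaps; the differences are organizational rather than substantive.
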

\begin{proof}[Proof of existence, uniqueness and i]
We start with the considered norms
  \begin{align*}
    \| \boldsymbol{\tau}_h \|_{\Sigho (\Tc_{\ov})} &:= \normlo{\boldsymbol{\tau}_h} + h_V \normlo{\div{\boldsymbol{\tau}_h}},\\
    \| \psi_h \|_{\tQh (\Tc_{\ov})} &:= \frac{1}{h_V} \normlo{\psi_h},\\
                                      \| \boldsymbol{\mu}_h \|_{\Wh(\ov)} &:= \normlo{\boldsymbol{\mu}_h}.
  \end{align*}
  In this part of the proof we use $\Sigho$ as symbol for $\Sigho(\Tc_{\ov})$ and similar for $\tQh(\Tc_{\ov})$ and $\Wh(\Tc_{\ov})$.
  Next we define the bilinearforms
  \begin{align*}
    a_\sigma(\sig,\boldsymbol{\tau}_h) &:=\int_{\ov} \sig \cdot \boldsymbol{\tau}_h \intd x \qquad \forall (\sig,\boldsymbol{\tau}_h) \in \Sigho \times \Sigho,\\
    b_1(\sig,\psi_h) &:=\displaystyle\int_{\ov} \div{\sig} \psi_h \intd x  \qquad \forall (\sig,\psi_h) \in \Sigho  \times \tQh, \\
    b_2(\sig,\boldsymbol{\mu}_h) &:=\displaystyle\int_{\ov} \sig \cdot \boldsymbol{\mu}_h \intd x \qquad \forall (\sig,\boldsymbol{\mu}_h) \in \Sigho \times \Wh.
  \end{align*}
  Using the Cauchy Schwarz inequality we see that $a_\sigma, b_1$ and $b_2$ are all continuous
  \begin{align*}
    a_\sigma(\sig,\boldsymbol{\tau}_h) \cle  \normlo{\sig} \normlo{\boldsymbol{\tau}_h} \cle  \normsigo{\sig} \normsigo{\boldsymbol{\tau}_h} \\
    b_1(\sig,\psi_h) \cle  \normlo{\div{\sig}} \normlo{\psi_h} \cle  \normsigo{\sig} \normtqo{\psi_h} \\
    b_2(\sig,\boldsymbol{\mu}_h) \cle \normlo{\sig}\normlo{\boldsymbol{\mu}_h} = \normwo{\sig}\normwo{\boldsymbol{\mu}_h}.
  \end{align*}
  As \begin{align*}
       \blfB((\sigV, \phi_h, \boldsymbol{\lambda}_h), &(\boldsymbol{\tau}_h, \psi_h, \boldsymbol{\mu}_h)) = \\
       &a_\sigma(\sig,\boldsymbol{\tau}_h) + b_1(\sig,\psi_h) + b_2(\sig,\boldsymbol{\mu}_h)  + b_1(\boldsymbol{\tau}_h,\phi_h) + b_2(\boldsymbol{\tau}_h,\boldsymbol{\lambda}_h),
     \end{align*}
     we show the existence and uniqueness of the saddle point problem (\ref{patchproblem}) as in chapter 4 in \cite{brezzifortin}, so it remains to show the ellipticity of $a_\sigma(\cdot,\cdot)$, i.e. 
     \begin{align} \label{kernelcoerc}
       a_\sigma(\sig,\sig) \cge &\normsigo{\sig}^2 \quad \forall \sig \in   \Sigho^0      
     \end{align}
      on the kernel
     \begin{align*}
       \Sigho^0 := \{ \sig \in \Sigho: b_1(\sig,\psi_h) + b_2(\sig,\boldsymbol{\mu}_h) = 0 ~ \forall (\psi_h, \boldsymbol{\mu}_h) \in \tQh^0 \times \Wh\},
     \end{align*}
     and the LBB condition with some $\beta_\sigma > 0$ such that, for all $(\psi_h,\boldsymbol{\mu}_h) \in \tQh^0 \times \Wh$,
     \begin{align} \label{LBBcondition}
       \sup_{\sig \in \Sigho} \frac{b_1(\sig,\psi_h) + b_2(\sig,\boldsymbol{\mu}_h) }{\normsigo{\sig}} \cge \beta_\sigma (\normtqo{\psi_h} + \normwo{\boldsymbol{\mu}_h}).
     \end{align}
     For a function $\sig$ in the kernel $\Sigho^0$ it holds in particular     
     \begin{align*}
       \b_1(\sig, \psi_h) = 0 \quad \forall \psi_h \in \tQh^0,
     \end{align*}
     and hence $\div{\sig}=0$, thus
     \begin{align*}
      \normlo{\sig} = \normsigo{\sig} \quad  \forall \sig \in   \Sigho^0. 
     \end{align*}
     This implies \eqref{kernelcoerc}. To show \eqref{LBBcondition} we will proceed in three steps. First we show the LBB condition for the bilinearform $b_1(\cdot, \cdot)$ and then for $b_2(\cdot, \cdot)$ by choosing proper candidates that do not destroy the first condition, and finally combine the two estimates. For $b_1(\cdot,\cdot)$ we first show the LBB condition on the reference patch $\widehat{\omega_V}$ and then on $\omega_V$. It should be mentioned that there exist different reference patches due to the number of elements that belong to a vertex, but for each triangulation $\Tc$ there exist a finite number of reference patches. We use the standard Raviart-Thomas interpolator $\IRT$ of order $k-1$ (see \cite{brezzifortin}, or \cite{RTinterpolator}) that provides 
  \begin{align*}
    b_1(\IRT \sigb,\psi_h) = b_1(\sigb,\psi_h) \quad \forall \psi_h \in \tQh(\widehat{\omega_V}) \quad \forall \sigb \in \Hdiv{\widehat{\ov}}
  \end{align*}
and
  \begin{align*}
    \|\IRT \sigb\|_{\Hdiv{\widehat{\omega_V}}} \cle \norm{\sigb}{H^1(\widehat{\omega_V})} \quad \forall \sigb \in [H^1(\widehat{\omega_V})]^d.
  \end{align*}
For an arbitrary  $\hat{\psi_h} \in \tQh^0(\widehat{\omega_V})$ we have
    \begin{align*}
      \sup\limits_{\hat{\sigb}_h \in \Sigma_{h,0}(\widehat{\omega_V})} \frac{b_1(\hat{\sigb}_h,\hat{\psi}_h)}{\norm{\hat{\sigb}_h}{\Hdiv{\widehat{\omega_V}}}} 
&\cge \sup\limits_{\hat{\sigb} \in [H_0^1(\widehat{\omega_V})]^d} \frac{b_1(\IRT \hat{\sigb},\hat{\psi}_h)}{\norm{\IRT \hat{\sigb}}{\Hdiv{\widehat{\omega_V}}}} \\
&\cge \sup\limits_{\hat{\sigb}\in [H_0^1(\widehat{\omega_V})]^d} \frac{b_1(\hat{\sigb},\hat{\psi}_h)}{\norm{\hat{\sigb}}{H^1(\widehat{\omega_V})}}.
    \end{align*}
Next we use the continuous Stokes LBB condition \eqref{contstokesLBB} to get 
\begin{align} \label{lbbrefpatch}
  \sup\limits_{\hat{\sigb}_h \in \Sigma_{h,0}(\widehat{\omega_V})} \frac{b_1(\hat{\sigb}_h,\hat{\psi}_h)}{\norm{\hat{\sigb}_h}{\Hdiv{\widehat{\omega_V}}}} 
& \ge \beta_1 \|\hat{\psi}_h\|_{L^2(\widehat{\omega_V})},
\end{align}
with $\beta_1>0$ that depends only of the shape and size of the triangles on the reference patch. To show the condition on $\omega_V$ we recall the definition of the Piola transformation. Let $F: \hat{T} \rightarrow T$ be the mapping of the reference triangle to an arbitrary element $T$. Then the Piola transformation is defined as
\begin{align*}
    \Pc(\hat{\sigb}) := \frac{1}{\det{F'}} F' \hat{\sigb} \quad \forall \hat{\sigb} \in [L^2(\Tref)]^d.
\end{align*}
For an arbitrary $\psi_h$ we now choose $\hat{\psi}_h =\psi_h$, and define $\sig^1 := \Pc (\hat{\sigb}_h)$ for  $\hat{\sigb}_h$ that delivers the supremum of Equation \eqref{lbbrefpatch}. Standard scaling arguments yield
\begin{align}\label{lbbbone}
\frac{b_1(\sig^1 , \psi_h)}{\norm{\sig^1}{\Sigma_{h,0}}} &=  \frac{\int_{\omega_V} \div{\sig^1}\psi_h \intd x}{ \norm{\sig^1}{L^2(\omega_V)} + h_V \norm{\div{\sig^1}}{L^2(\omega_V)}} 
                                                                 \cge  \frac{h_V^{(d-2)/2}\int_{\widehat{\omega_V}} \div{\hat{\sigb}_h}\hat{\psi}_h\intd x }{ \norm{\hat{\sigb}_h}{L^2(\widehat{\omega_V})} + \norm{\div{\hat{\sigb}_h}}{L^2(\widehat{\omega_V})}} \\
                                                               &\ge h_V^{(d-2)/2} \beta_1 \|\hat{\psi}_h\|_{L^2(\widehat{\omega_V})} = \beta_1\frac{1}{h_V} \norm{\psi_h}{L^2(\omega_V)} = \beta_1\norm{\psi_h}{\tQh}. \nonumber
\end{align}
We continue with the LBB condition for $b_2(\cdot, \cdot)$.
We start with the case $d=3$. Choose an arbitrary $\boldsymbol{\mu}_h=\kos_{\vec{x} - V}(\a_h)  \in W_h$ with $\a_h \in [\pol^{k-3}(\ov)]^3$. Furthermore, due to theorem \ref{app::theoremone},  we can assume that $\div{\a_h}=0$. Now we define
\begin{align*}
  \sig^2 := -\curl{(\lambda_V\a_h)}
\end{align*}
where $\lambda_V$ is the hat function of the vertex $V$. Note that we have
\begin{align} \label{bonekernelprop}
  b_1(\sig^2, \psi_h) = 0.
\end{align}
Using integration by parts we get
\begin{align*}
  b_2(\sig^2,\boldsymbol{\mu}_h) & =-\int_{\ov} \curl{(\lambda_V\a_h)} \cdot \kos_{\vec{x} - V}(\a_h) \intd x \\
                         &= -\int_{\ov} (\lambda_V\a_h) \cdot \curl{ \left((\vec{x} - V) \times \a_h \right)}  \intd x.
\end{align*}
Using basic vector calculus leads to
\begin{align*}
  \curl{ \left((\vec{x} - V) \times \a_h \right)}  &=  (\vec{x} - V)\underbrace{\div{\a_h}}_{=0} + \underbrace{\nabla(\vec{x} - V)}_{I} \a_h - \a_h \underbrace{\div{(\vec{x} - V)}}_{=3} - \nabla \a_h(\vec{x} - V) \\
  &=-2\a_h - \nabla \a_h(\vec{x} - V)
\end{align*}
and so
\begin{align*}
  b_2(\sig^2,\boldsymbol{\mu}_h) & =-\int_{\ov} (\lambda_V\a_h) \cdot (-2\a_h - \nabla \a_h(\vec{x} - V)) \intd x \\
                    &=\int_{\ov} 2 \lambda_V\a_h^2 \intd x +  \int_{\ov} \lambda_V\a_h \cdot \nabla \a_h(\vec{x} - V) \intd x \\
                    &=\int_{\ov} 2 \lambda_V\a_h^2 \intd x + \frac{1}{2} \int_{\ov} \lambda_V  \nabla \a_h^2 \cdot (\vec{x} - V) \intd x\\
                    &=\int_{\ov} 2 \lambda_V\a_h^2 \intd x - \frac{1}{2} \int_{\ov} \a_h^2 \underbrace{\div{((\vec{x} - V)\lambda_V)}}_{3 \lambda_V + \nabla \lambda_V (\vec{x} - V)} \intd x\\
                    &=\frac{1}{2} \int_{\ov} \lambda_V\a_h^2 \intd x - \frac{1}{2} \int_{\ov} \a_h^2 \nabla \lambda_V (\vec{x} - V) \intd x.
\end{align*}
On any $T \subset \TcV$ the gradient of $\lambda_V$ is equivalent to the scaled normal vector $\boldsymbol{n}_V$ on the face opposite to $V$, and one can see that
\(-\boldsymbol{n}_V \cdot (\vec{x} - V) \leq 0\), what finally leads to
     \begin{align} \label{lbbbtwo}
       \b_2(\sig^2,\boldsymbol{\mu}_h) \cge \beta_2 \normlo{\a_h}^2  \cge \beta_2 \normwo{\boldsymbol{\mu}_h}^2.
     \end{align}
For the case $d=2$ we proceed similar. For an arbitrary $\boldsymbol{\mu}_h= \kos_{\vec{x} - V}(\xi_h)  \in W_h$ with $\xi_h \in \pol^{k-3}(\ov)$ we define
\begin{align*}
\sig^2 := -\Curl{(\lambda_V\xi_h)}
\end{align*}
Again it holds property \eqref{bonekernelprop} and we see
     \begin{align*}
       \b_2(\sig^2,\boldsymbol{\mu}_h) &= -\int_{\ov}\Curl{(\lambda_V\xi_h)}  \cdot \kos_{\vec{x} - V}(\xi_h) \intd x \\
                          &= -\int_{\ov}\nabla{(\lambda_V\xi_h)} \cdot(\vec{x} - V) \xi_h\intd x \\
                          &= \int_{\ov}(\lambda_V\xi_h) \div{( (\vec{x} - V) \xi_h )}\intd x \\
                          &= \int_{\ov}2 \lambda_V\xi_h^2 +  \frac{1}{2} \int_{\ov} \lambda_V (\vec{x} - V) \nabla \xi_h^2 \intd x.       
     \end{align*}
     The rest is similar as before. Now we can show \eqref{LBBcondition}. For an arbitrary $\psi_h \in \tQh^0$ and $\boldsymbol{\mu}_h \in W_h$ we choose the functions $\sig^1, \sig^2$  that fulfill  Equations \eqref{lbbbone} and \eqref{lbbbtwo} and \eqref{bonekernelprop}. Furthermore we can scale $\sig^1$ and $\sig^2$ so that 
\begin{align*}
  \norm{\sig^1}{\Sigma_{h,0}} = \norm{\psi_h}{\tQh} \quad \text{and} \quad \norm{\sig^2}{\Sigma_{h,0}} = \normwo{\boldsymbol{\mu}_h}.
\end{align*}
For $\alpha = \frac{1}{\beta_1\beta_2}$ we define then $\sig = \sig^1 + \alpha \sig^2$ and get
     \begin{align*}
       b_1(\sig,\psi_h) + b_2(\sig,\boldsymbol{\mu}_h) &= b_1(\sig^1,\psi_h) + b_2(\sig^1,\boldsymbol{\mu}_h) + \alpha b_2(\sig^2,\boldsymbol{\mu}_h) \\
                                          &\cge \beta_1 \normtqo{\psi_h}^2 - \normsigo{\sig^1}  \normwo{\boldsymbol{\mu}_h} + \alpha \beta_2  \normwo{\boldsymbol{\mu}_h}^2 \\
                                          & \cge \beta_1 \normtqo{\psi_h}^2 - \normtqo{\psi_h} \normwo{\boldsymbol{\mu}_h} + \alpha \beta_2  \normwo{\boldsymbol{\mu}_h}^2.
     \end{align*}
     Using Young's inequality we have
     \begin{align*}
       \normtqo{\psi_h} \normwo{\boldsymbol{\mu}_h} \le \frac{\beta_1}{2} \normtqo{\psi_h}^2 + \frac{1}{2 \beta_1} \normwo{\boldsymbol{\mu}_h}^2,
     \end{align*}
     and so
     \begin{align*}
       b_1(\sig,\psi_h) + b_2(\sig,\boldsymbol{\mu}_h) &\cge \frac{\beta_1}{2} \normtqo{\psi_h}^2 + \frac{1}{2\beta_1} \normwo{\boldsymbol{\mu}_h}^2 \\
       &\cge \left(\frac{\beta_1}{2}  + \frac{1}{2\beta_1}\right) ( \normtqo{\psi_h}+\normwo{\boldsymbol{\mu}_h})^2.
     \end{align*}
     As $\normsigo{\sig} = \normsigo{\sig^1 + \alpha \sig^2} \le (1+\alpha) ( \normtqo{\psi_h}+\normwo{\boldsymbol{\mu}_h})$ we get
     \begin{align*}
       \frac{b_1(\sig,\psi_h) + b_2(\sig,\boldsymbol{\mu}_h) }{\normsigo{\sig}} \cge \beta (\normtqo{\psi_h} + \normwo{\boldsymbol{\mu}_h}) 
     \end{align*}
     and thus \eqref{LBBcondition} holds with $\beta_\sigma = \frac{\beta_1^2 + 1}{2\beta_1(1+\alpha)}$. Using the theory of saddle point problems, chapter 4 in \cite{brezzifortin}, Equation \eqref{patchproblem} has a unique and stable solution $\sigV$ that fulfills  
     \begin{align} 
        \normlo{\sigV} \cle \| \div{\w_h} \|_{\tQh'} \cle h_V \normlo{\div{\w_h}},
     \end{align}
     so property \eqref{lemmaonepropone} was shown.
     \end{proof}
\begin{remark}
  In the first step of the above estimation the constant depends on the operator norms of $\bubb_V$ und $\oswald$ which are independent of $h$. For $\oswald$ we refer to \cite{oswald},\cite{ernguermond15}. For the $\bubb_V$ using the implementation given by the coefficients \eqref{coeffbubb} the estimation is clear as $\lambda_{V_i}(x_j) \in (0,1)$.
\end{remark}

\begin{proof}[Proof of ii. and iii.]
     Now let $c \in \R$ be a constant on the patch, then the right hand side of Equation \eqref{patchproblem} reads as
\begin{align*}
  \int_{\omega_V} \div{\w_h} \underbrace{\bubb_V (c -\oswald c}_{=0}) \intd x  = 0,
\end{align*}
but as also
\begin{align*}
  \int_{\omega_V} \div{\sig^V} c \intd x = c \int_{\partial \omega_V} \sig^V \cdot n \intd x= 0,
\end{align*}
it follows that the solution $\sig^V$ fulfills even
\begin{align*}
      \int_{\omega_V} \div{\sig^V} \psi_h \intd x  =  \skpV{\div{\w_h}, \bubb_V \left( \psi_h - \oswald \psi_h \right)}  \quad \forall \psi_h  \in \tQh(\Tc_{\ov}),
\end{align*}
in contrast to the restriction on $\tQh^0(\Tc_{\ov})$. Using a trivial extension by $\vzero$ on $\Omega \setminus \ov$ we get \eqref{lemmaoneproptwo}.
 To show \eqref{lemmaonepropthree} we use a decomposition of the polynomial space of order $k-2$ given by
 \begin{align} \label{koszuldec}
   \begin{split}
     [\pol^{k-2}(\ov)]^2 &= \nabla\pol^{k-1}(\ov) \oplus \kos_{\vec{x}-V}(\pol^{k-3}(\ov)) \\
     [\pol^{k-2}(\ov)]^3 &= \nabla\pol^{k-1}(\ov) \oplus \kos_{\vec{x}-V}( [\pol^{k-3}(\ov)]^3), 
   \end{split}
 \end{align}
 see \cite{koszul}, Equation (3.11). Note that by the shift invariance of polynomial spaces, the origin of the Koszul operator $\kappa$ can be set to an arbitrary point $V$.
 For an arbitrary $b_h \in \pol^{k-1}(\ov) \subset \tQh(\Tc_{\ov})$ we get using the properties of the bubble projector \eqref{bubbprojB} and the Oswald operator
 \begin{align*}
\int_{\ov} \sigV \cdot \nabla b_h \intd x = -\int_{\ov} \div{\sigV} \b_h \intd x = -\int_{\ov} \div{\w_h} \underbrace{\bubb_V(\b_h-\oswald \b_h}_{=0}) \intd x = 0.
 \end{align*}
As $\kos_{\vec{x}-V}( [\pol^{k-3}(\ov)]^3) = W_h(\ov)$ we already know that the solution $\sigV$ of (\ref{patchproblem}) fulfills 
 \begin{align*}
   b_2(\sigV, \boldsymbol{\mu}_h) &= \int_{\ov} \sigV \cdot \kos_{\vec{x}-V}( \a_h) \intd x = 0
 \end{align*}
and so it follows \eqref{lemmaonepropthree}. For the case $d=2$ the argument is the same.
\end{proof}
\subsection{Definition of the reconstruction $\Rec$} \label{sectiondefrec}
Now we can define the reconstruction. For that we define the space
\begin{align*}
\Sigma_h := \RT{k-1}(\Tc) \subset \Hdiv{\Omega}.
\end{align*}
For a given $\w_h \in \Vh$ and all $V \in \Vert$ let $\sigV$ be the solution of Equation \eqref{patchproblem} on $\omega_{V}$ extended by $0$ on $\Omega \setminus \omega_{V}$. Then we define the reconstruction as
\begin{align} \label{defreconstruction}
  \Rec \w_h := \w_h - \sig \in V_h + \Sigma_h \quad \text{with} \quad \sig := \sum\limits_{V \in \Vert} \sigV.
\end{align}
\begin{remark}
Due to the zero normal trace of the solutions $\sigV$ on the patches $\ov$ the sum $\sig$ is still normal continuous over facets thus $\sig \in \Sigma_h$.
\end{remark}
\begin{proof}[Proof of theorem \ref{theoremone}]
  For an arbitrary $\tqh  \in \tQh$ it holds using \eqref{lemmaoneproptwo}, \eqref{bubbproj} and the properties of the bubble projector \eqref{bubbprojprop}
  \begin{align*}
\skp{\div{\Rec \w_h}, \tqh} &=\skp{\div{\w_h}, \tqh} -  \sum\limits_{V \in \Vert} (\div{\sigV}, \tqh)_{L^2(\omega_{V})} \\
                            &=\skp{\div{\w_h}, \tqh} -  \sum\limits_{V \in \Vert} (\div{\w_h}, \bubb_{V} (\tqh - \oswald \tqh))_{L^2(\omega_{V})}\\
                            &=\skp{\div{\w_h}, \tqh} -   (\div{\w_h}, \underbrace{\sum\limits_{V \in \Vert} \bubb_{V}}_{= I} (\tqh - \oswald \tqh))_{L^2(\Omega)}\\
                            &=\skp{\div{\w_h}, \tqh} -   (\div{\w_h}, \tqh)_{L^2(\Omega)} + (\div{\w_h},\oswald \tqh)_{L^2(\Omega)} \\
                            &= (\div{\w_h},\oswald \tqh)_{L^2(\Omega)}.
  \end{align*}
By that it follows for an arbitrary $q_h \in Q_h$, due $\oswald q_h = q_h$, that
  \begin{align*}
    \skp{\div{(\w_h - \Rec \w_h)}, q_h} &= 0, 
  \end{align*}
 and if  $\skp{\div \w_h , q_h} = 0 ~ \forall q_h \in Q_h$ that 
  \begin{align} \label{rightoswald}
    \skp{\div{\Rec \w_h}, \tqh} = (\div{\w_h},\underbrace{\oswald \tqh}_{\in \Qh})_{L^2(\Omega)} = 0.
  \end{align}
  Finally, using \eqref{lemmaonepropthree} and \eqref{lemmaonepropone} we get 
  \begin{align*}
    \skp{\vg, \w_h - \Rec \w_h} &=  \sum\limits_{V \in \Vert}  (\vg, \sig^V)_{L^2(\omega_{V})} =  \sum\limits_{V \in \Vert}  (\vg-\Projlktwo \vg, \sig^V)_{L^2(\omega_{V})} \\
                            &\cle \sum\limits_{V \in \Vert}  \|\vg-\Projlktwo \vg\|_{L^2(\omega_{V})}  \|\sig^V\|_{L^2(\omega_{V})} \\
                            &\cle \sum\limits_{V \in \Vert}  \|\vg-\Projlktwo \vg\|_{L^2(\omega_{V})} h_V \|\div{\w_h}\|_{L^2(\omega_{V})} \\
                            &\cle \normsmooth{\vg}_{k-2}\|\nabla \w_h\|_{L^2(\Omega)}. 
  \end{align*} 
\end{proof}

\section{The Reconstruction operator for the mini finite element method} \label{section::mini}
For the mini finite element method \cite{abf:1984} the bubble enriched velocity spaces read
\begin{align*}
\pol^k_{+}(T) & := \pol^k(T) \oplus \left\{ \pol^{k+d}(T) \cap H^1_0(T) \right\}, \quad \text{and}\\
  \pol_{+}^k(\Tc) & := \{ q_h : \restr{q_h}{T} \in \pol_{+}^k(T)~ \forall T \in \Tc \}.
\end{align*}
The definition of the mini element now reads as 
\begin{align*}
  \Vh := [\pol^k_{+}(\Tc)]^d \cap [C^0(\Omega)]^d \quad \text{and} \quad \Qh := \pol^k(\Tc) \cap C^0(\Omega).
\end{align*}
As in the Taylor--Hood case we solve small problems on the vertex patch $\ov$ but slightly change the right hand side and the polynomial orders. For that we define 
  \begin{align*}
  \Sigho (\Tc_{\ov}) &:= \{ \sig \in \RT{k+d-1}(\Tc_{\ov}): \tracen \sig = 0 \ondom \partial \ov  \} \subset \Hdivoz{\ov} \\
  \tQh (\Tc_{\ov}) &:= \pol^{k+d-1}(\Tc_{\ov}) \subset L^2(\ov) \qquad   \tQh^0 (\Tc_{\ov}) :=  \tQh (\Tc_{\ov}) \cap L^2_0(\ov),
\end{align*}
and for $k \ge 2$ also
\begin{align*}
   \Wh (\ov) &:= \kos_{\vec{x} - V}(\pol^{k-2}(\ov)) \subset \Lambda_V :=  \kos_{\vec{x} - V}(L^2(\ov)) \quad \text{for } \quad d=2 \\
  \Wh (\ov) &:= \kos_{\vec{x} - V}([\pol^{k-2}(\ov)]^3)  \subset  \Lambda_V := \kos_{\vec{x} - V}([L^2(\ov)]^3) \quad \text{for } \quad d=3.
\end{align*}
So for a given function $\w_h \in \Vh$ we have $\div{\w_h} \in \tQh(\Tc_{\ov})$ and seek $(\sigV, \phi_h, \boldsymbol{\lambda}_h) \in (\Sigho (\Tc_{\ov}) \times \tQh^0 (\Tc_{\ov}) \times \Wh(\ov))$ so that
\begin{align} \label{patchproblemmini}
  \blfB((\sigV, \phi_h, \boldsymbol{\lambda}_h), (\boldsymbol{\tau}_h, \psi_h, &\boldsymbol{\mu}_h)) = \skpV{\div{\w_h}, \bubb_V \left( \psi_h - \oswaldt \psi_h \right)} \\
                                                                        &\forall (\boldsymbol{\tau}_h, \psi_h, \boldsymbol{\mu}_h) \in \Sigho (\Tc_{\ov}) \times \tQh^0 (\Tc_{\ov}) \times \Wh(\ov), \nonumber
\end{align}
where $\oswaldt:\tQh(\Tc_{\ov}) \rightarrow \Qh(\Tc_{\ov})$. Note that $\oswaldt$ now maps element-wise polynomials of degree $k+d-1$ to continuous element-wise polynomials of order $k$.
\begin{remark}
  This new operator $\oswaldt$ can be seen as the Oswald operator $\oswald$ of order $k$ applied to polynomials of higher degree. 
\end{remark}
\begin{proposition}
  Equation \ref{patchproblemmini} has a unique solution $(\sigV, \phi_h, \boldsymbol{\lambda}_h)$ satisfying
  \begin{align*}
    & i.   && \normlo{\sigV} \cle h_V \normlo{\div{\w_h}},\\
    & ii.  && \skp{\div{\sigV}, \tqh} = \skpV{\div{\w_h}, \bubb_V \left( \tqh - \oswaldt \tqh \right)} \quad \forall \tqh \in \tQh(\Tc)\\
    &      && \text{where $\sigV$ was trivially extended by 0 on $\Omega$,}\\
    & iii. && \text{and the solution is $L^2(\ov)$-orthogonal on polynomials of order $k-1$, i.e.}\\
    &      && \skpV{\sigV, \xi} = 0 \quad \forall \xi \in [\pol^{k-1}(\ov)]^d.
  \end{align*}
\end{proposition}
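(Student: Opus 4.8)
The plan is to mirror the proof of Theorem~\ref{localtheorem} essentially verbatim, since the mini-element local problem \eqref{patchproblemmini} has exactly the same structure as \eqref{patchproblem}: only the polynomial orders and the averaging operator have been shifted. The bilinear form $\blfB$ is unchanged, so the three ingredients needed to invoke the saddle-point theory of \cite{brezzifortin} are again continuity of $a_\sigma, b_1, b_2$, coercivity of $a_\sigma$ on the kernel $\Sigho^0$, and the combined LBB condition \eqref{LBBcondition}. Continuity and the kernel-coercivity follow by the same argument as before, because a function in the kernel is still divergence-free on the patch, hence $\normlo{\sig}=\normsigo{\sig}$.

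For the LBB condition \eqref{LBBcondition} I would reuse the two-part construction. The $b_1$-part relies on the Raviart--Thomas interpolator $\IRT$ together with the continuous Stokes LBB condition \eqref{contstokesLBB}; since $\div \RT{k+d-1}(\Tc_{\ov}) = \pol^{k+d-1}(\Tc_{\ov}) = \tQh(\Tc_{\ov})$ and $\IRT$ (now of order $k+d-1$) preserves $b_1$ and is $H(\mathrm{div})$-stable, the reference-patch estimate \eqref{lbbrefpatch} and the Piola scaling \eqref{lbbbone} carry over unchanged. For the $b_2$-part, the key point is that the Koszul-based test field $\sig^2 = -\curl(\lambda_V \a_h)$ (or $-\Curl(\lambda_V \xi_h)$ in 2D) with $\a_h \in [\pol^{k-2}(\ov)]^3$ lies in the Raviart--Thomas space of the required order and still satisfies $b_1(\sig^2,\psi_h)=0$, so the computation leading to \eqref{lbbbtwo} reproduces verbatim. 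Combining the two with Young's inequality as before yields \eqref{LBBcondition}, and the saddle-point theory then gives unique solvability together with the stability bound, proving i.

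For ii.\ I would repeat the constant-function argument: testing the right-hand side of \eqref{patchproblemmini} with a patch-constant $c$ gives zero because $\oswaldt c = c$ and $\bubb_V(c-\oswaldt c)=0$, while $\int_{\ov}\div\sigV\,c=0$ by the zero normal trace; hence the divergence identity extends from $\tQh^0(\Tc_{\ov})$ to all of $\tQh(\Tc_{\ov})$, and trivial extension by zero gives ii. Statement iii.\ again uses the Koszul decomposition \eqref{koszuldec}, now one order higher,
\begin{align*}
  [\pol^{k-1}(\ov)]^d = \nabla \pol^{k}(\ov) \oplus \kos_{\vec{x}-V}(\pol^{k-2}(\ov))
\end{align*}
(with the three-component variant in 3D): orthogonality against the gradient part follows from ii.\ together with $\bubb_V(b_h - \oswaldt b_h)=0$ for $b_h \in \pol^{k}(\ov)\subset \tQh(\Tc_{\ov})$, while orthogonality against the Koszul part is exactly the equation $b_2(\sigV,\boldsymbol{\mu}_h)=0$ satisfied by the solution, since $\Wh(\ov)=\kos_{\vec{x}-V}([\pol^{k-2}(\ov)]^d)$. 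The main thing to check carefully — and the only genuine obstacle — is that the shifted polynomial degrees line up consistently: that $\oswaldt$ indeed maps $\tQh(\Tc_{\ov})$ into $\Qh(\Tc_{\ov})$ with $\restr{\oswaldt}{\Qh}=\mathrm{id}$, that $\div\Sigho(\Tc_{\ov})=\tQh^0(\Tc_{\ov})$ still holds for the order $k+d-1$, and that the bubble-enriched velocity space does produce divergences in $\tQh(\Tc_{\ov})$; once these bookkeeping facts are confirmed, every analytic estimate transfers without modification.
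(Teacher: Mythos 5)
Your proposal is correct and matches the paper exactly: the paper's own proof consists of the single remark that the argument of Theorem \ref{localtheorem} carries over verbatim, which is precisely the route you take. Your additional bookkeeping checks (the shifted Raviart--Thomas order, the shifted Koszul decomposition $[\pol^{k-1}(\ov)]^d = \nabla \pol^{k}(\ov) \oplus \kos_{\vec{x}-V}(\pol^{k-2}(\ov))$, and the identity $\restr{\oswaldt}{\Qh}=\mathrm{id}$) are exactly the points one must verify and they all hold.
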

\begin{proof}
  The proof uses exactly the same arguments as the proof of theorem \ref{localtheorem}.
\end{proof}
The reconstruction is defined as in \eqref{defreconstruction}. 
\begin{proposition}For the reconstruction operator $\Rec$ defined by \eqref{defreconstruction} holds
\begin{align}
  & i.   && \skp{\div{\Rec w_h}, \tqh} = \skp{\div{ w_h}, \oswaldt \tqh}  \quad \forall \tqh \in \tQh,\nonumber\\
  & ii.  && \skp{\div{(\vw_h - \Rec \vw_h)}, q_h} = 0 \quad \forall \vw_h \in \vV_h, \forall q_h \in \Qh,\nonumber\\
  & iii. && \skp{\div \vw_h , q_h} = 0 ~ \forall q_h \in Q_h \Rightarrow \skp{\div{\Rec w_h}, \tqh} = 0 \quad \forall \tqh \in \tQh,\nonumber\\
  &      && \hspace{6cm} \text{i.e.} \quad \div{\Rec \vw_h} = 0,\nonumber\\
  & iv.  && \skp{\vg, \vw_h - \Rec \vw_h} \le C_{\mathrm{cons}} \normsmooth{\vg}_{k-1} \|\nabla \w_h\|_{L^2(\Omega)}.\label{conserror_mini}
  \end{align}
\end{proposition}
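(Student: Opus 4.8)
The plan is to transcribe the proof of Theorem \ref{theoremone} almost verbatim, since the mini element differs only in three bookkeeping details: the Oswald interpolator $\oswald$ is replaced by $\oswaldt$, the local spaces carry the elevated degrees $k+d-1$, and the local solution $\sigV$ is now $L^2(\ov)$-orthogonal to $[\pol^{k-1}(\ov)]^d$ rather than to $[\pol^{k-2}(\ov)]^d$. All three ingredients needed for the Taylor--Hood argument (stability, the discrete divergence identity, and the enhanced orthogonality) are supplied in their exact analogues by the Proposition governing \eqref{patchproblemmini}, so no new machinery is required.

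First, for statement i I would fix an arbitrary $\tqh \in \tQh$, write $\Rec \w_h = \w_h - \sig$ with $\sig = \sum_{V \in \Vert} \sigV$, and on each patch insert property ii of the local Proposition to replace $(\div \sigV, \tqh)_{L^2(\ov)}$ by $\skpV{\div \w_h, \bubb_V(\tqh - \oswaldt \tqh)}$. Summing over $V$ and invoking the partition-of-unity property $\sum_{V \in \Vert} \bubb_V = \ident$ from \eqref{bubbproj}--\eqref{bubbprojprop}, the telescoping sum collapses to $\skp{\div \Rec \w_h, \tqh} = \skp{\div \w_h, \oswaldt \tqh}$, exactly as in \eqref{rec:propBBB}.

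Statements ii and iii then follow at once from i, just as in the Taylor--Hood case. Since $\oswaldt$ restricts to the identity on $\Qh$ (the analogue of \eqref{oswaldid}, justified by $\oswaldt$ being the degree-$k$ Oswald operator as recorded in the preceding remark), we get $\skp{\div \Rec \w_h, q_h} = \skp{\div \w_h, q_h}$ for $q_h \in \Qh$, which is ii; and if $\w_h$ is discretely divergence-free, then $\oswaldt \tqh \in \Qh$ forces $\skp{\div \Rec \w_h, \tqh} = \skp{\div \w_h, \oswaldt \tqh} = 0$ for every $\tqh \in \tQh$, giving iii and in particular $\div \Rec \w_h = 0$. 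For statement iv I would mirror the estimate \eqref{rec:propC}: writing $\skp{\vg, \w_h - \Rec \w_h} = \sum_{V \in \Vert} (\vg, \sigV)_{L^2(\ov)}$, the orthogonality $\skpV{\sigV, \xi} = 0$ for all $\xi \in [\pol^{k-1}(\ov)]^d$ lets me subtract $\mathcal{P}^{k-1}_{\ov}\vg$ for free, and a patchwise Cauchy--Schwarz inequality together with the stability bound $\normlo{\sigV} \cle h_V \normlo{\div \w_h}$ produces $\sum_{V \in \Vert} h_V \normlo{\vg - \mathcal{P}^{k-1}_{\ov}\vg}\,\normlo{\div \w_h}$; a final Cauchy--Schwarz in the patch index, the finite overlap of vertex patches, and $\normlo{\div \w_h} \cle \normlo{\nabla \w_h}$ then yield the claimed bound $C_{\mathrm{cons}}\normsmooth{\vg}_{k-1}\|\nabla \w_h\|_{L^2(\Omega)}$.

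The only genuinely non-routine point lies upstream, in the Proposition on \eqref{patchproblemmini}: one must check that the stated orthogonality really holds at order $k-1$. This is precisely where the elevated degree $k+d-1$ in $\Sigho(\Tc_{\ov})$ and $\tQh(\Tc_{\ov})$, together with $\pol^{k-2}$ in $\Wh(\ov)$, enters, since the Koszul decomposition \eqref{koszuldec} must now split $[\pol^{k-1}(\ov)]^d$ into $\nabla \pol^{k}(\ov)$ and the range of $\kos_{\vec{x}-V}$ on polynomials of degree $k-2$. As that decomposition and the supporting LBB and ellipticity arguments transfer verbatim from the proof of Theorem \ref{localtheorem} (the local Proposition explicitly claims this), I expect no obstacle in the present statement beyond the careful tracking of these shifted indices.
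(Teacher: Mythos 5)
Your proposal is correct and follows essentially the same route as the paper, which proves this proposition simply by appealing to the arguments of Theorem \ref{theoremone} with the degree shifts you describe, and which highlights exactly the point you identify: that $\oswaldt$ must map into $\Qh$ so that the step corresponding to \eqref{rightoswald} goes through. No substantive difference.
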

\begin{proof}
  The proof uses exactly the same arguments as the proof of theorem \ref{theoremone}.
  In Equation \eqref{rightoswald} it is important that the Oswald operator maps to $\Qh$, which is the reason to replace $\oswald$ by $\oswaldt$ for the mini element.
\end{proof}
\begin{remark}
  The modified mini finite element method also fits in the abstract setting of Section~\ref{sec::errorest}, but
  here the consistency error is of order $k+1$ due to \eqref{conserror_mini}, i.e.
   $$
   \norm{\nabla (\vu - \vu_h)}{L^2} \leq 2 (1+ C_F) \inf_{\vw_h \in \vV_h} \norm{\nabla (\vu - \vw_h)}{L^2} + C_\mathrm{cons} \normsmooth{\Delta \vu}_{k-1}.
   $$
  Hence, also in case of the mini finite element methods,
  the pressure-dependent term from the classical estimate is replaced by a pressure-independent
  consistency error of the same order.
\end{remark}

\section{Numerical examples}

In this section we give several numerical examples to validate and confirm the theoretical findings.
As computational framework, including the implementation of the reconstruction operator $\Rec$,
we used NGSolve (see \cite{ngsolve}) and the NGSpy interface. For all numerical examples we use unstructered, shape regular and quasi-uniform triangulations $\Tc$ generated by Netgen (see \cite{netgen}). 
\subsection{2d example}\label{sec:example1}
The first example studies the solution
\begin{align*}
  \vu := \curl{\zeta} \quad \textrm{with} \quad \zeta := x^2(x-1)^2y^2(y-1)^2 \quad \text{and} \quad
p := x^7+y^7-\frac{1}{4},
\end{align*}
of the Stokes problem on the unit square $\Omega=(0,1)^2$ with $\nu = 10^{-3}$
and the right hand side $\bfe := - \nu \laplace \vu - \nabla p$.

Tables~\ref{tab:errors_k2_example1}-\ref{tab:errors_k4_example1} show the \(L^2\) velocity and pressure errors and their estimated order
of convergence (eoc) for the modified Taylor--Hood finite element methods of order $k=2,3,4$.
All methods show the optimal convergence orders as expected by the theory.
Table~\ref{tab:errors_mini_example1} allows the same conclusions for the modified mini finite element method of lowest order.

\begin{figure}
  \centering
    \pgfplotstableread{stokes_TH21_1_2d_nu.out}\THone 
\pgfplotstableread{stokes_TH21_2_2d_nu.out}\THtwo 
\pgfplotstableread{stokes_TH21_3_2d_nu.out}\THthree 

\pgfplotstableread{stokes_RTH21_1_2d_nu.out}\RTHone 
\pgfplotstableread{stokes_RTH21_2_2d_nu.out}\RTHtwo 
\pgfplotstableread{stokes_RTH21_3_2d_nu.out}\RTHthree 
                         
\begin{tikzpicture}
  [
  scale=0.8
  ]
  \begin{axis}[
    name=plot1,
    legend entries={$\norm{\vu-\vu_h}{H^1(\Omega)}$,
                $\norm{\vu-\vu_h}{L^2(\Omega)}$,
                $\norm{p-p_h}{L^2(\Omega)}$,
                  ndofs = 332,
                  ndofs = 1236,
                  ndofs = 4772},              
                title={Without Reconstruction},
                scale=1,
                xlabel=$\nu$,
                legend columns=3,
                legend style={text width=8em, text height=0.9em },
    xmode=log,
    ymax=1e6,
    ymin=1e-6,
    ymode=log,
    y tick label style={
      /pgf/number format/.cd,
      fixed,
      precision=2
    },
    x tick label style={
      /pgf/number format/.cd,
      fixed,
      precision=2
    },
    %
     grid=both,
    legend style={
      cells={align=left},
      at={(0.2,-0.2)},
      anchor = north west
    },
    ]
    \addlegendimage{only marks, mark=*,black}
    \addlegendimage{only marks, mark=*,orange}
    \addlegendimage{only marks, mark=*,green}
    \addlegendimage{dotted}
    \addlegendimage{dashed}
    \addlegendimage{}
    
    \addplot[line width=1pt, dotted, color=black] table[x=0,y=1]{\THone};
    \addplot[line width=1pt, dotted, color=orange] table[x=0,y=2]{\THone};
    \addplot[line width=1pt, dotted, color=green] table[x=0,y=3]{\THone};
    
    \addplot[line width=1pt, dashed, color=black] table[x=0,y=1]{\THtwo};
    \addplot[line width=1pt, dashed, color=orange] table[x=0,y=2]{\THtwo};
    \addplot[line width=1pt, dashed, color=green] table[x=0,y=3]{\THtwo};

    \addplot[line width=1pt, color=black] table[x=0,y=1]{\THthree};
    \addplot[line width=1pt, color=orange] table[x=0,y=2]{\THthree};
    \addplot[line width=1pt, color=green] table[x=0,y=3]{\THthree};

  \end{axis}

  \begin{axis}[
    name=plot2,
                    scale=1,
    title={With Reconstruction},
    xlabel=$\nu$,
    xmode=log,
    ymax=1e6,
    ymin=1e-6,
    ymode=log,
    y tick label style={
      /pgf/number format/.cd,
      fixed,
      precision=2
    },
    x tick label style={
      /pgf/number format/.cd,
      fixed,
      precision=2
    },
    %
     grid=both,
     at={(35,0)},
     anchor = south west,
    legend style={
      cells={align=left},
      at={(1.05,0.70)},
      anchor = north west
    },
    ]

    \addplot[line width=1pt,  dotted, color=black] table[x=0,y=1]{\RTHone};
    \addplot[line width=1pt,  dotted,  color=orange] table[x=0,y=2]{\RTHone};
    \addplot[line width=1pt,  dotted, color=green] table[x=0,y=3]{\RTHone};

    \addplot[line width=1pt, dashed, color=black] table[x=0,y=1]{\RTHtwo};
    \addplot[line width=1pt, dashed,  color=orange] table[x=0,y=2]{\RTHtwo};
    \addplot[line width=1pt, dashed, color=green] table[x=0,y=3]{\RTHtwo};

    \addplot[line width=1pt, color=black] table[x=0,y=1]{\RTHthree};
    \addplot[line width=1pt,  color=orange] table[x=0,y=2]{\RTHthree};
    \addplot[line width=1pt, color=green] table[x=0,y=3]{\RTHthree};

  \end{axis}

\end{tikzpicture}

    \caption{Errors for the classical (left) and the modified (right) Taylor--Hood finite element method of order \(k=2\)
    on three fixed meshes and several choices of \(\nu\) in section~\ref{sec:example1}.\label{fig:nuerrors_example1}}
\end{figure}
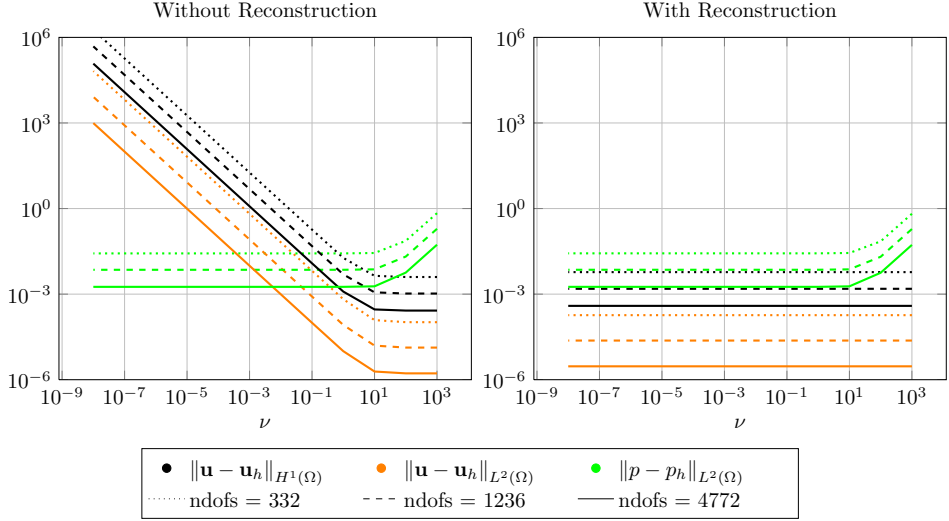

To clearly see the consequences of pressure-robustness, Figure \ref{fig:nuerrors_example1} shows the \(L^2\) errors for different
$\nu = 10^j$ for \(j=-8,\ldots,3\) on three fixed meshes for the classical and the modified Taylor--Hood finite element method of order \(k=2\).
There are several observations to make:
\begin{itemize}
 \item For \(\nu \geq 1\) the irrotational part in the right-hand side \(\bfe\) is not larger than the
 divergence-free part. In this situation both methods deliver similar errors. Due to the additional consistency error,
 the errors of the modified method are a bit larger than the errors of the classical method.
 \item For \(\nu < 1\) the irrotational part in the right-hand side \(\bfe\) begins to dominate and so does
 the pressure-dependent term in the a priori error estimate. As predicted by these estimates, the errors of the
 classical Taylor--Hood finite element method deteriorate and scale with \(1/\nu\). The modified Taylor--Hood
 method, due to its divergence-free test functions in the right-hand side, does not see the irrotational force
 and the errors are independent of \(\nu\).
 \item The transition point \(\nu \approx 1\) where the error becomes pressure-dominated
 is the same on all three meshes. Hence, mesh refinement cannot heal this behaviour.
 \item The velocity error of the modified method is independent of $\nu$, since \(\vu_h\)
 is exactly the same for every \(\nu\) by construction of the discretization. The pressure error however
 increases for large \(\nu\) in both the unmodified and the modified method. This is consistent
 with the error estimate \eqref{bestapproxpressure_estimate}.
\end{itemize}

For the mini finite element method the observations are almost identical. However, since the pressure space has the same order
as the velocity space, the pressure-dependent contributions in the a priori error estimates converge faster and can compensate
smaller values of \(\nu\) to some extent.
%

\begin{table}\centering
  \pgfplotstabletypeset[
  column type=c,	
  every head row/.style={before row=\toprule,after row=\midrule\midrule},
  every last row/.style={after row=\bottomrule},
  every even row/.style={before row={\rowcolor[gray]{0.95}}},
  create on use/poly/.style={},
  columns/poly/.style={column name=},
  columns/ndof/.style={column name=\#dof},
  columns/h1err/.style={column name=$\|\vu-\vu_h\|_{H^1}$},
  columns/l2err/.style={column name=$\|\vu-\vu_h\|_{L^2}$},
  columns/l2errp/.style={column name=$\|p-p_h\|_{L^2}$},
  create on use/eoch1/.style={create col/dyadic refinement rate=h1err},
  columns/eoch1/.style={column name=eoc, fixed zerofill,precision=3},
  create on use/eocl2/.style={create col/dyadic refinement rate=l2err},
  columns/eocl2/.style={column name=eoc, fixed zerofill,precision=3},
  create on use/eocl2p/.style={create col/dyadic refinement rate=l2errp},
  columns/eocl2p/.style={column name=eoc, fixed zerofill,precision=3},
  columns = {ndof,h1err,eoch1,l2err,eocl2,l2errp,eocl2p},
  every col 3/.style={column type/.add={|}{|}}
  ]{paper_2d_p2_new.out}
  \caption{\label{tab:errors_k2_example1}Errors for the modified Taylor--Hood finite element method of order $k=2$
  in Section~\ref{sec:example1}.}
\end{table}

\begin{table}\centering
  \pgfplotstabletypeset[
  column type=c,	
  every head row/.style={before row=\toprule,after row=\midrule\midrule},
  every last row/.style={after row=\bottomrule},
  every even row/.style={before row={\rowcolor[gray]{0.95}}},
  columns/ndof/.style={column name=\#dof},
  columns/h1err/.style={column name=$\|\vu-\vu_h\|_{H^1}$},
  columns/l2err/.style={column name=$\|\vu-\vu_h\|_{L^2}$},
  columns/l2errp/.style={column name=$\|p-p_h\|_{L^2}$},
  create on use/eoch1/.style={create col/dyadic refinement rate=h1err},
  columns/eoch1/.style={column name=eoc, fixed zerofill,precision=3},
  create on use/eocl2/.style={create col/dyadic refinement rate=l2err},
  columns/eocl2/.style={column name=eoc, fixed zerofill,precision=3},
  create on use/eocl2p/.style={create col/dyadic refinement rate=l2errp},
  columns/eocl2p/.style={column name=eoc, fixed zerofill,precision=3},
  columns = {ndof,h1err,eoch1,l2err,eocl2,l2errp,eocl2p},
  every col 3/.style={column type/.add={|}{|}}
  ]{paper_2d_p3_new.out}
  \caption{\label{tab:errors_k3_example1}Errors for the modified Taylor--Hood finite element method of order $k=3$
  in Section~\ref{sec:example1}.}
\end{table}

\begin{table}\centering
  \pgfplotstabletypeset[
  column type=c,	
  every head row/.style={before row=\toprule,after row=\midrule\midrule},
  every last row/.style={after row=\bottomrule},
  every even row/.style={before row={\rowcolor[gray]{0.95}}},
  columns/ndof/.style={column name=\#dof},
  columns/h1err/.style={column name=$\|\vu-\vu_h\|_{H^1}$},
  columns/l2err/.style={column name=$\|\vu-\vu_h\|_{L^2}$},
  columns/l2errp/.style={column name=$\|p-p_h\|_{L^2}$},
  create on use/eoch1/.style={create col/dyadic refinement rate=h1err},
  columns/eoch1/.style={column name=eoc, fixed zerofill,precision=3},
  create on use/eocl2/.style={create col/dyadic refinement rate=l2err},
  columns/eocl2/.style={column name=eoc, fixed zerofill,precision=3},
  create on use/eocl2p/.style={create col/dyadic refinement rate=l2errp},
  columns/eocl2p/.style={column name=eoc, fixed zerofill,precision=3},
  columns = {ndof,h1err,eoch1,l2err,eocl2,l2errp,eocl2p},
  every col 3/.style={column type/.add={|}{|}}
  ]{paper_2d_p4_new.out}
  \caption{\label{tab:errors_k4_example1}Errors for the modified Taylor--Hood finite element method of order $k=4$
  in Section~\ref{sec:example1}.}
\end{table}

\begin{table}\centering
  \pgfplotstabletypeset[
  column type=c,	
  every head row/.style={before row=\toprule,after row=\midrule\midrule},
  every last row/.style={after row=\bottomrule},
  every even row/.style={before row={\rowcolor[gray]{0.95}}},
  columns/ndof/.style={column name=\#dof},
  columns/h1err/.style={column name=$\|\vu-\vu_h\|_{H^1}$},
  columns/l2err/.style={column name=$\|\vu-\vu_h\|_{L^2}$},
  columns/l2errp/.style={column name=$\|p-p_h\|_{L^2}$},
  create on use/eoch1/.style={create col/dyadic refinement rate=h1err},
  columns/eoch1/.style={column name=eoc, fixed zerofill,precision=3},
  create on use/eocl2/.style={create col/dyadic refinement rate=l2err},
  columns/eocl2/.style={column name=eoc, fixed zerofill,precision=3},
  create on use/eocl2p/.style={create col/dyadic refinement rate=l2errp},
  columns/eocl2p/.style={column name=eoc, fixed zerofill,precision=3},
  columns = {ndof,h1err,eoch1,l2err,eocl2,l2errp,eocl2p},
  every col 3/.style={column type/.add={|}{|}}
  ]{paper_2d_p2_mini.out}
  \caption{\label{tab:errors_mini_example1}Errors for the modified lowest-order mini finite element method in Section~\ref{sec:example1}.}
\end{table}

\subsection{3d example}\label{sec:example2}
The second example investigates the velocity and pressure
\begin{align*}
  \vu &:= \curl{(\zeta,\zeta,\zeta)} \quad \textrm{with} \quad \zeta := x^2(x-1)^2y^2(y-1)^2z^2(z-1)^2 \\
p &:= x^5+y^5+z^5-\frac{1}{2},
\end{align*}
on the unit cube $\Omega=(0,1)^3$ for $\nu = 10^{-3}$.
Table~\ref{tab:errors_k2_example2} lists the \(L^2\) errors for the modified Taylor--Hood finite element method of order
$k=2$. Also in this 3D example the convergence rates are optimal.
\begin{remark}
For the ease of implementation in NGSolve  we used Brezzi-Douglas-Marini elements of order $k$ (see \cite{brezzifortin} and \cite{Brezzi1985}) instead of the  Raviart-Thomas elements of order $k-1$ as basis for the $H(\textrm{div})$-conforming spaces $\Sigma_h(\Tc)$ and the local spaces $\Sigma_{h,0}(\Tc_{\ov})$. This does not affect the convergence order of the error. 
\end{remark}

\begin{table}\centering
  \pgfplotstabletypeset[
  column type=c,	
  every head row/.style={before row=\toprule,after row=\midrule\midrule},
  every last row/.style={after row=\bottomrule},
  every even row/.style={before row={\rowcolor[gray]{0.95}}},
  columns/ndof/.style={column name=\#dof},
  columns/h1err/.style={column name=$\|\vu-\vu_h\|_{H^1}$},
  columns/l2err/.style={column name=$\|\vu-\vu_h\|_{L^2}$},
  columns/l2errp/.style={column name=$\|p-p_h\|_{L^2}$},
  create on use/eoch1/.style={create col/dyadic refinement rate=h1err},
  columns/eoch1/.style={column name=eoc, fixed zerofill,precision=3},
  create on use/eocl2/.style={create col/dyadic refinement rate=l2err},
  columns/eocl2/.style={column name=eoc, fixed zerofill,precision=3},
  create on use/eocl2p/.style={create col/dyadic refinement rate=l2errp},
  columns/eocl2p/.style={column name=eoc, fixed zerofill,precision=3},
  columns = {ndof,h1err,eoch1,l2err,eocl2,l2errp,eocl2p},
  every col 3/.style={column type/.add={|}{|}}
  ]{paper_3d_p2_new.out}
  \caption{\label{tab:errors_k2_example2}Errors for the modified Taylor--Hood finite element method of order $k=2$
  in Section~\ref{sec:example2}.}
\end{table}

\subsection{Navier--Stokes for a 2D potential flow}
This example studies a two-dimensional potential flow for the harmonic potential \(\chi := x^5 - 10 x^3 y^2 + 5 x y^4\).
Note that $\chi$ is the real part of the analytic function $z^5$ (with $z = x + i y$).
We look for the solution of the steady incompressible Navier--Stokes equations 
$- \nu \laplace \vu + (\vu \cdot \nabla) \vu + \nabla p = \vzero$, $\div{\vu}=0$
with inhomogeneous Dirichlet boundary conditions for $\nu = 0.1$.
The exact solution of the velocity is given by $\vu = \nabla \chi$ and $p = 664/63 - 25/2 (x^2+y^2)^4$, modelling the collision of five jets
in the plane. For the construction and significance of potential flows the reader may consult \cite{prandtl:book}.
For the nonlinear term holds
$(\vu \cdot \nabla) \vu = 1/2 \nabla (\vu^2)$.
Looking at the weak formulation of this term, it holds for all $\vv \in \vV^0$
\begin{align*}
\int_\Omega (\vu \cdot \nabla) \vu \cdot \vv \intd x = \int_\Omega \nabla \left( \frac{\vu^2}{2} \right) \cdot \vv \intd x = - \int_\Omega  \left( \frac{\vu^2}{2} \right)\div{\vv} \intd x = 0.
\end{align*}
This orthogonality may not hold in the discrete case, so similar as for the modified Stokes problem \eqref{disc:weak:stokes:problem},
a non-standard discretization of the nonlinear convection term is proposed that employs the reconstruction $\Rec$ in the velocity test functions 
\begin{align*}
\int_\Omega (\vu_h \cdot \nabla) \vu_h \cdot \Rec{\vv_h} \intd x.
\end{align*}
In Tables \ref{table::nvsexample} and \ref{table::nvsexamplerefine} one can see the differences in the errors,
when standard or non-standard discretizations of the nonlinear convection term are used in case of Taylor--Hood elements of order $k=2,3,4$
on two consecutive meshes with 352 and 
1408 elements.
Note that for \(k=4\) the exact solution satisfies $\vu \in \Vh$,
but only for the non-standard discretization the velocity error vanishes.
Similar to the Stokes example \ref{sec:example1} we see that a mesh refinement does not heal the observed problems.

\begin{table}\centering
  \captionsetup[subfloat]{labelformat=empty}
  \subfloat[]{
  \pgfplotstabletypeset[
  column type=c,	
  every head row/.style={before row={  
      \multicolumn{5}{c}{without reconstruction} \\
      \toprule},after row=\midrule\midrule},
  every last row/.style={after row=\bottomrule},
  every even row/.style={before row={\rowcolor[gray]{0.95}}},
  columns/ndof/.style={column name=\#dof},
  columns/h1err/.style={column name=$\|\vu-\vu_h\|_{H^1}$},
  columns/l2err/.style={column name=$\|\vu-\vu_h\|_{L^2}$},
  columns/l2errp/.style={column name=$\|p-p_h\|_{L^2}$},
  columns = {k,ndof, h1err,l2err,l2errp},
  ]{paper_2d_nvs.out}
  } \vspace{-0.5cm} \\
 \subfloat[]{
  \pgfplotstabletypeset[
  column type=c,	
  every head row/.style={before row={  
      \multicolumn{5}{c}{with reconstruction} \\
      \toprule},after row=\midrule\midrule},
  every last row/.style={after row=\bottomrule},
  every even row/.style={before row={\rowcolor[gray]{0.95}}},
  columns/ndof/.style={column name=\#dof},
  columns/h1err/.style={column name=$\|\vu-\vu_h\|_{H^1}$},
  columns/l2err/.style={column name=$\|\vu-\vu_h\|_{L^2}$},
  columns/l2errp/.style={column name=$\|p-p_h\|_{L^2}$},
  columns = {k,ndof, h1err,l2err,l2errp},
  every col 3/.style={column type/.add={|}{|}}
  ]{paper_2d_Rnvs.out}
}
\caption{Errors for the Taylor--Hood and the modified Taylor--Hood  finite element method for the Navier--Stokes example $|\Tc| = 352$}\label{table::nvsexample}
\end{table}

\begin{table}\centering
  \captionsetup[subfloat]{labelformat=empty}
  \subfloat[]{
  \pgfplotstabletypeset[
  column type=c,	
  every head row/.style={before row={  
      \multicolumn{5}{c}{without reconstruction} \\
      \toprule},after row=\midrule\midrule},
  every last row/.style={after row=\bottomrule},
  every even row/.style={before row={\rowcolor[gray]{0.95}}},
  columns/ndof/.style={column name=\#dof},
  columns/h1err/.style={column name=$\|\vu-\vu_h\|_{H^1}$},
  columns/l2err/.style={column name=$\|\vu-\vu_h\|_{L^2}$},
  columns/l2errp/.style={column name=$\|p-p_h\|_{L^2}$},
  columns = {k,ndof, h1err,l2err,l2errp},
  ]{paper_2d_nvs_refine.out}
  } \vspace{-0.5cm} \\
 \subfloat[]{
  \pgfplotstabletypeset[
  column type=c,	
  every head row/.style={before row={  
      \multicolumn{5}{c}{with reconstruction} \\
      \toprule},after row=\midrule\midrule},
  every last row/.style={after row=\bottomrule},
  every even row/.style={before row={\rowcolor[gray]{0.95}}},
  columns/ndof/.style={column name=\#dof},
  columns/h1err/.style={column name=$\|\vu-\vu_h\|_{H^1}$},
  columns/l2err/.style={column name=$\|\vu-\vu_h\|_{L^2}$},
  columns/l2errp/.style={column name=$\|p-p_h\|_{L^2}$},
  columns = {k,ndof, h1err,l2err,l2errp},
  every col 3/.style={column type/.add={|}{|}}
  ]{paper_2d_Rnvs_refine.out}
}
\caption{Errors for the Taylor--Hood and the modified Taylor--Hood  finite element method for the Navier--Stokes example with $|\Tc| = 1408$}\label{table::nvsexamplerefine}
\end{table}


\section{Appendix}
\begin{theorem} \label{app::theoremone}
For $\Omega \subseteq \R^3$, $V \in \Omega$ and $k \geq 0$ it holds
  \begin{align*}
\{ \kos_{\vec{x}-V} (\vq_1): \vq_1 \in [\pol^k(\Omega)]^3 \} = \{ \kos_{\vec{x}-V}(\vq_2): \vq_2 \in [\pol^k(\Omega)]^3, \div{\vq_2}=0\} 
  \end{align*}
\end{theorem}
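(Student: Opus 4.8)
The plan is to prove the two inclusions of the claimed set equality separately. The inclusion ``$\supseteq$'' is immediate: every divergence-free polynomial is in particular a polynomial, so the image set on the right is contained in the image set on the left. All of the content therefore lies in the reverse inclusion ``$\subseteq$'': given an arbitrary $\vq_1 \in [\pol^k(\Omega)]^3$, I must produce a $\vq_2 \in [\pol^k(\Omega)]^3$ with $\div{\vq_2} = 0$ and $\kos_{\vec{x}-V}(\vq_1) = \kos_{\vec{x}-V}(\vq_2)$.

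The key observation is a description of the kernel of $\kos_{\vec{x}-V}$: since $(\vec{x}-V)\times a = 0$ exactly when $a$ is pointwise parallel to $\vec{x}-V$, two fields share the same image under $\kos_{\vec{x}-V}$ if and only if their difference has the form $f\,(\vec{x}-V)$ for some scalar field $f$. Guided by this, I make the ansatz $\vq_2 := \vq_1 - f\,(\vec{x}-V)$ with $f \in \pol^{k-1}(\Omega)$. Any such $\vq_2$ automatically lies in $[\pol^k(\Omega)]^3$ and satisfies $\kos_{\vec{x}-V}(\vq_2) = \kos_{\vec{x}-V}(\vq_1)$, because $(\vec{x}-V)\times(\vec{x}-V) = 0$. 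The only remaining requirement is $\div{\vq_2} = 0$.

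Writing $g := \div{\vq_1} \in \pol^{k-1}(\Omega)$ and using $\div\big(f(\vec{x}-V)\big) = (\vec{x}-V)\cdot\nabla f + f\,\div(\vec{x}-V)$ together with $\div(\vec{x}-V) = 3$, the condition $\div{\vq_2} = 0$ reduces to the single scalar equation
\begin{align*}
  (\vec{x}-V)\cdot\nabla f + 3f = g.
\end{align*}
Thus everything hinges on solving this equation within polynomials of degree at most $k-1$. The operator $L f := (\vec{x}-V)\cdot\nabla f + 3f$ is the Euler operator centered at $V$ shifted by $3$; after translating so that $V = 0$, Euler's identity shows that $L$ acts on a homogeneous polynomial of degree $m$ simply as multiplication by $m+3$. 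Since $m + 3 \ge 3 > 0$ for every $m \ge 0$, $L$ is a bijection on each homogeneous component and hence on all of $\pol^{k-1}(\Omega)$, yielding a (unique) $f$ with $Lf = g$ and completing the construction of $\vq_2$.

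The main obstacle — indeed the only nontrivial point — is the invertibility of $L$; everything else is bookkeeping with polynomial degrees and the elementary kernel description of $\kos_{\vec{x}-V}$. The cleanest route is precisely the decomposition into homogeneous components combined with Euler's identity, which diagonalizes $L$ with strictly positive eigenvalues and thereby guarantees solvability in the correct polynomial class.
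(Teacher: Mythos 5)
Your proof is correct and follows essentially the same route as the paper: the same ansatz $\vq_2 = \vq_1 \pm f\,(\vec{x}-V)$, the same reduction to the scalar equation $(\vec{x}-V)\cdot\nabla f + 3f = -\div \vq_1$, and the same reliance on the invertibility of that Euler-type operator on $\pol^{k-1}$. Your way of establishing the invertibility --- diagonalizing $L$ on homogeneous components via Euler's identity, with eigenvalues $m+3>0$ --- is in fact a cleaner version of the paper's injectivity-by-induction argument (which, as written, contains a harmless slip in the eigenvalue of the top-degree component).
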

\begin{proof}
  Without loss of generality we can set $V=\vzero$.
  For $k=0$ there is nothing to prove.
  In the case $k \geq 1$, for $\vq_1 \in [\pol^k(\Omega)]^3$ we define
  \begin{align*}
    \vq_2 := \vq_1 + \vec{x} w 
  \end{align*}
  with $w \in \pol^{k-1}(\Omega)$. Note that
  \begin{align} \label{app:equi}
    \kos_{\vec{x}} (\vq_2) = \vec{x} \times \vq_2 =  \vec{x} \times \vq_1 + \underbrace{\vec{x} \times \vec{x}}_{=\vzero} w = \kos_{\vec{x}} (\vq_1), 
  \end{align}
  and
  \begin{align*}
    \div{\vq_2} = \div (\vq_1 +\vec{x} w  ) =\div{\vq_1} + \div(\vec{x}) w + \vec{x} \cdot \nabla w = \div{\vq_1}  + 3 w +  \vec{x} \cdot \nabla w. 
  \end{align*}
 As we want to have $\div{\vq_2}=0$, we have to solve the equation
  \begin{align} \label{app:inhom}
 3 w +  \vec{x} \cdot \nabla w =-\div{\vq_1}.
  \end{align}
  Due to the finite dimensionality of $\pol^{k-1}(\Omega)$, this linear inhomogeneous equation can be solved, if we show that from 
  \begin{align} \label{app:assumption}
    3 w + \vec{x} \cdot \nabla w = 0 \quad \text{it follows} \quad \Rightarrow w=0.
  \end{align}
  For $k=1$ it holds $w \in \pol^0(\Omega)$ and $\vq_1 \in  [\pol^1(\Omega)]^3$ and the statement is obviously true.
  In the case $k \geq 2$ we use the following representation of $w$
  \begin{align*}
    w(x,y,z) = \tilde{w}(x,y,z) + \sum\limits_{i=0}^{k-1} \sum\limits_{j=0}^{k-1-i}c_{ij} x^i y^j z^{k-1-i-j}, 
  \end{align*}
  with $\tilde{w} \in \pol^{k-2}(\Omega)$. Using assumption (\ref{app:assumption}) and $3 \tilde{w} + \vec{x} \cdot \nabla \tilde{w} =: \hat{w} \in \pol^{k-2}(\Omega)$  we now have
  \begin{align*}
    3 w + \vec{x} \cdot \nabla w  = \hat{w} + \sum\limits_{i=0}^{k-1} \sum\limits_{j=0}^{k-1-i} (k-1) c_{ij} x^i y^j z^{k-1-i-j} = 0 \\
    \quad \forall (x,y,z) \in \Omega,
  \end{align*}
  what leads to $c_{ij}=0 ~\forall i,j$ and so $\hat{w} = 3 \tilde{w} + \vec{x} \cdot \nabla \tilde{w} = 0$. By induction it follows $w = 0$.
  Therefore, we can solve equation (\ref{app:inhom}) and for every $\vq_1$ we find a $\vq_2$ with $\div{\vq_2}=0$ and due to (\ref{app:equi}) the theorem is shown.
\end{proof}

\bibliographystyle{siamplain}
\bibliography{rec_references}
\end{document}